\documentclass[12pt,reqno]{amsart}
\usepackage{amsfonts}
\usepackage{amsmath,amscd}
\usepackage{fullpage}
\usepackage{amssymb}
\usepackage{centernot} 
\usepackage{enumerate} 
\usepackage{parskip}
\usepackage{pb-diagram} 
\usepackage{mathrsfs}
\usepackage[OT2,T1]{fontenc}
\usepackage{seqsplit}
\usepackage{enumitem}
\usepackage{color}
\usepackage{array}
\usepackage{verbatim}
\usepackage{url}

\addtolength{\textheight}{-.2in}
\addtolength{\footskip}{.2in}

\DeclareSymbolFont{cyrletters}{OT2}{wncyr}{m}{n}
\DeclareMathSymbol{\Sha}{\mathalpha}{cyrletters}{"58}

\definecolor{refkey}{rgb}{1,1,1}
\definecolor{labelkey}{rgb}{1,1,1}
\definecolor{cite}{rgb}{0.9451,0.2706,0.4941}
\definecolor{ruri}{rgb}{0.0078,0.4022,0.8010}

\usepackage[%
bookmarks=true,bookmarksnumbered=true,%
colorlinks=true,linkcolor=ruri,citecolor=red%
 ]{hyperref}

\makeindex \setcounter{tocdepth}{1}

\def\F{{\rm \mathbb{F}}}
\def\Z{{\rm \mathbb{Z}}}

\def\Q{{\rm \mathbb{Q}}}

\def\R{{\rm \mathbb{R}}}

\def\P{{\rm \mathbb{P}}}

\def\p{{\rm \mathfrak{p}}}

\def\a{{\rm \mathfrak{a}}}

\def\O{{\rm \mathcal{O}}}

\def\Nm{{\rm Nm}}

\def\avg{{\rm avg}}

\def\Aut{{\rm Aut}}

\def\Cl{{\rm Cl}}

\def\Disc{{\rm Disc}}

\def\SL{{\rm SL}}

\def\Sym{{\rm Sym}}

\def\GL{{\rm GL}}
\def\Gal{{\rm Gal}}

\def\rk{{\rm rk}}

\def\Sel{{\rm Sel}}

\newcommand{\minus}{\scalebox{0.70}[1.0]{$-$}}

\newcommand{\rank}{\mathop{\mathrm{rk}}}

\numberwithin{equation}{section}

\newtheorem{theorem}{Theorem}
\newtheorem{lemma}[theorem]{Lemma}

\newtheorem{remark}[theorem]{Remark}
\newtheorem{definition}[theorem]{Definition}

\newtheorem{corollary}[theorem]{Corollary}
\newtheorem{proposition}[theorem]{Proposition}

\usepackage{marginnote,color}
\usepackage[usenames,dvipsnames]{xcolor}

\usepackage{tikz}

\def\shownotes{\def\inline##1##2##3{ \begin{adjustwidth}{3mm}{7mm}\mbox{}\par \noindent
{\color{##1}\hspace{-1.9cm}{\large ##2}\vspace{-\baselineskip}\\##3}
\newline\end{adjustwidth}} \def\inlinewide##1##2##3{ \begin{adjustwidth}{0mm}{0cm}\mbox{}\par \noindent
{\color{##1}\hspace{-1.6cm}{\large ##2}\vspace{-\baselineskip}\\##3}
\newline\end{adjustwidth}}  \def\marg##1##2##3{\marginnote{\color{##1}{\large ##2}\\{\small ##3}}[-.8cm]}}

\shownotes

\makeatletter
\let\@@pmod\pmod
\DeclareRobustCommand{\pmod}{\@ifstar\@pmods\@@pmod}
\def\@pmods#1{\mkern4mu({\operator@font mod}\mkern 6mu#1)}
\makeatother

\begin{document}
\setlength{\parskip}{2pt}
\setlength{\parindent}{22.5pt}

\title{A positive proportion of cubic fields are not monogenic yet have no local obstruction to being so}

\author{Levent Alp\"oge}
\author{Manjul Bhargava}
\author{Ari Shnidman}

\begin{abstract}
We show that a positive proportion of cubic fields are not monogenic, despite having no local obstruction to being monogenic. Our proof involves the comparison of $2$-descent and $3$-descent in a certain family of Mordell curves $E_k \colon y^2 = x^3 + k$.
As a by-product of our methods, we show that, for every $r \geq 0$, a positive proportion of curves $E_k$
have Tate--Shafarevich group with $3$-rank at least~$r$. 
\end{abstract}

\maketitle

\vspace{-.25in}

\section{Introduction}
One of the most common errors made in a first course in algebraic number
theory is the assumption that every number field $K$ is {\it monogenic}, i.e., that the ring of integers $\O_K$ of $K$ is of the form $\Z[\alpha]$ for some $\alpha\in K$.  While this assertion is true for quadratic fields, it is
expected to be false for 100\% of number fields of any given degree
$d\geq 3$, although this expectation has not been proven for any $d$.

It is easy to construct number fields that fail to be monogenic for {\it local}
reasons.  The first example of such a non-monogenic number field was given by Dedekind, who 
showed that the field $\Q[x]/(x^3 - x^2 - 2x - 8)$, in which 2 splits completely, is not monogenic~\cite{Dedekind}. His result is a special case of the assertion that: if 2 splits completely in a number field $K$ of degree $d\geq 3$, then $K$ cannot be monogenic.  Indeed, if $\O_K\cong \Z[x]/(f(x))$ for a monic integral polynomial $f(x)$ of degree $d\geq 3$, then 2 factors in $\O_K$ as $f(x)$ factors (mod $2$); but $f$ cannot split completely (mod $2$), as there are only two monic linear polynomials (mod $2$). 

However, it has not previously been known even that a positive
proportion of number fields of degree $d\geq3$, {\it that have no local
obstruction to being monogenic}, are not monogenic.  The purpose of this
paper is to prove this result in the case $d=3$, i.e., for cubic~fields.

\begin{theorem}\label{S3main}
When isomorphism classes of cubic fields are ordered by absolute discriminant, a positive proportion are not monogenic and yet have no local obstruction to being
monogenic.
\end{theorem}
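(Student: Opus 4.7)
The plan is to translate monogenicity of a cubic field $K$ of discriminant $D$ into an integer-point problem on the Mordell curve $E_{-432D}\colon y^2 = x^3 - 432D$, and then to play $2$-descent against $3$-descent in the twist family $\{E_k\}$ to force a positive density of $D$ for which the local-to-global obstruction is nontrivial. Explicitly, $K$ is monogenic iff its index form $I_K$ (a binary integral cubic form of discriminant $D$ attached to $\O_K$) represents $\pm 1$ over $\Z$; the classical syzygy between the covariants of a binary cubic form then converts an integer solution of $I_K(u,v) = \pm 1$ into an integer point $\bigl(4H_{I_K}(u,v),\, 4C_{I_K}(u,v)\bigr)$ of $E_{-432D}$. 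Correspondingly, $K$ has no local obstruction to being monogenic iff $I_K(u,v) = \pm 1$ is solvable in $\Z_p$ for every $p$, and in that case $K$ naturally defines a class in the $3$-Selmer group of $E_{-432D}$ which lies in the image of $E_{-432D}(\Q)$ exactly when $K$ is globally monogenic.

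The strategy for the theorem is to exhibit a positive proportion of cubic discriminants $D$ for which (i) the locally monogenic class of $K$ is a nontrivial element of $\mathrm{Sel}_3(E_{-432D})$, and (ii) $E_{-432D}(\Q)$ has rank $0$. When both hold, the class is forced into $\Sha(E_{-432D})[3]$, so $K$ cannot be monogenic although it is locally monogenic. For (ii), a Bhargava--Shankar-type bound on the average of the $2$-Selmer group over the twist family $\{E_{-432D}\}$ forces a positive proportion of $D$ to have $2$-Selmer rank $0$, and hence Mordell--Weil rank $0$. For (i), I would carry out a $3$-descent orbit count in the integral representation whose orbits parametrize locally soluble $3$-covers of $E_k$, set up compatibly with the Delone--Faddeev parametrization of cubic rings by binary cubic forms so that the orbits encoding locally monogenic cubic fields can be isolated; imposing the maximality and local-solubility conditions then gives a positive density of $D$ admitting such a nontrivial class. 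The by-product on the $3$-rank of $\Sha$ follows by iterating the lower bound in (i) with additional independent local conditions while preserving (ii), producing arbitrarily many linearly independent $3$-Selmer classes that all map into $\Sha$ because the rank is zero.

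The principal difficulty is to establish (i) and (ii) \emph{jointly} for a positive proportion of $D$. Either condition separately can be controlled by geometry-of-numbers counts in the relevant representation, but their intersection requires a quantitative independence of the $2$- and $3$-descent invariants in the twist family, which in turn demands error terms in the $3$-descent orbit count that are uniform enough to survive sieving by maximality and by the $2$-Selmer-rank-zero condition. A secondary point of care is the translation from positive density of $D$ (by absolute value) to positive density of cubic fields ordered by $|\disc|$, which is unproblematic since each $D$ supports only $O(1)$ cubic fields but needs to be checked for the specific positive-density subfamily produced. Once these ingredients are in place, the theorem drops out: a positive proportion of cubic fields, ordered by absolute discriminant, are locally monogenic yet not monogenic.
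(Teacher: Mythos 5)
Your setup---index form $f_K$ of discriminant $D$, the genus one curve $z^3 = f_K(x,y)$ with Jacobian a Mordell curve, and a Selmer class that is globally trivial iff $K$ is monogenic---matches the paper exactly. But the engine you propose for finishing does not run, for two reasons.

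First, your step (ii) claims that the average-$3$ bound for $\#\Sel_2$ in the Mordell twist family ``forces a positive proportion of $D$ to have $2$-Selmer rank $0$.'' It does not. Since $E^D[2](\Q)=0$, the theorem says $\avg\, 2^{\rk E^D}\leq \avg\,\#\Sel_2(E^D)=3$; this is fully compatible with $100\%$ of the family having rank exactly $1$, and indeed for $D<0$ the $3$-Selmer parity computed in \cite{j=0} combined with $p$-parity makes $\#\Sel_3(E^D)$ odd-dimensional, so $\rk E^D\geq 1$ \emph{always} in the relevant negative-discriminant families. So the rank-$0$ sieve you need has density zero on the complex side, and is unavailable on the totally real side from the $2$-Selmer average alone. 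Since you cannot get rank $0$, you cannot conclude that a single nontrivial $3$-Selmer class must land in $\Sha$.

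Second, because rank $0$ is out of reach, the mechanism the paper actually uses is different and you have missed it: instead of one $3$-Selmer class per discriminant, the paper manufactures \emph{many}. It fixes $n=3\prod p_i$ with $p_i\equiv 2\pmod 3$ and restricts $D$ to a congruence set $\Sigma_n$ that simultaneously kills all local obstructions (so every cubic field of discriminant $D\in\Sigma_n$ has $\pm\alpha_K\in\Sel_{\phi_D}(E^D)\hookrightarrow\Sel_3(E^D)$) \emph{and} guarantees, via a vanishing theorem for $3$-torsion in class groups together with exact counts of cubic fields of given discriminant (higher composition laws when $D>0$, class field theory when $D<0$), that there are exactly $2^t$ or $3\cdot 2^t$ cubic fields of each discriminant $D$ in a relative-density-$\geq 1/2$ subset $U_n^\pm$. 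These classes are pairwise independent (Lemma~\ref{distinct}). If $m_D$ of them are monogenic, then $1+2m_D\leq 3^{\rk E^D}$, hence $(1+2m_D)^{\log_3 2}\leq\#\Sel_2(E^D)$. Averaging and using $\avg\#\Sel_2=3$ gives $\avg_{U_n^\pm}(1+2m_D)^{\log_3 2}\leq 5$, and a convexity argument (crucially exploiting $\log_3 2<1$, i.e., the asymmetry between the descents) forces $m_D=0$ for most $D\in U_n^\pm$. This is where the positive proportion of non-monogenic fields comes from; no rank-$0$ statement is ever established. Your proposed $3$-descent orbit count would produce averages of the kind the paper explicitly dismisses (Proposition~\ref{prop:averagecount} ``will not be used directly''), not the per-discriminant exact counts the convexity step requires. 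In short: the theorem does not fall to ``one nontrivial class plus rank zero''; it falls to ``exponentially many classes plus a $2$-vs-$3$ exponent mismatch,'' and that second idea is absent from your sketch.
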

\noindent In other words, we prove that a positive proportion of cubic fields, up to isomorphism, are non-monogenic for truly global reasons. In fact we produce such positive proportions of cubic fields having both possible real signatures.

 It is interesting to note that the condition of having {\it no local
 obstruction to being monogenic} is stronger than the condition of being
 {\it locally monogenic}!
 \begin{definition}{\em 
 A number field $K$ is {\it locally monogenic at $p$} if $\O_K \otimes_\Z \Z_p$ is generated by one element as a $\Z_p$-algebra; it is called {\it locally monogenic} if it is locally monogenic at all $p$.}
 \end{definition}
\noindent However, it is possible for a number field to {\it have a local obstruction to being
monogenic} even if it is locally monogenic! 
The obstruction comes from the {\it index
form} $f_K:\O_K/\Z \to \wedge^{d}\O_K$ given by $\alpha\mapsto 1\wedge\alpha\wedge\alpha^2\wedge\cdots\wedge\alpha^{d-1}$. 
A choice of $\Z$-basis for $\O_K$ induces an isomorphism $\wedge^{d}\O_K \simeq \Z$, 
and, via this isomorphism, $f_K$ may be viewed as a homogeneous form of degree~$d \choose 2$ in $d-1$ variables.
Then the number field $K$ is monogenic if and only if $f_K$ represents~$\pm1$ over $\Z$.  In terms of $f_K$, the number field $K$ is locally monogenic at $p$ if and only if $f_K$ represents a unit over $\Z_p$, but this does not imply that $f_K$ represents $\pm1$ over~$\Z_p$. 
\begin{definition}\label{locobs}{\em We say that $K$ {\it has no local obstruction to being monogenic}
if $f_K$ represents~$1$ over $\Z_p$ for all primes $p$ or represents $\minus1$ over $\Z_p$ for all primes $p$.}
\end{definition}
\noindent
See \S\ref{subsec:locmonex} for an example of a number field that is locally monogenic but 
nevertheless has a local obstruction to being monogenic. Note that the index form of a number field $K\neq \Q$ always represents $+1$ and $\minus1$ over $\R$, so we need not consider obstructions over $\R$.
%in Definition~\ref{locobs}. 

We now specialize to the case of cubic fields $K$,
so that $f_K$ is then an integral binary cubic form.  The following theorem gives the proportion of cubic fields that are locally monogenic, as well as the proportion of those that have no local obstruction to being monogenic.

\begin{theorem}\label{thm2}\label{localthm}
When isomorphism classes of either totally real or complex cubic fields are ordered by absolute discriminant:
\begin{enumerate}
\item[$(a)$] the proportion of these cubic fields that are locally monogenic is $19/21\approx 90.48\%$;

\item[$(b)$] the proportion of these cubic fields that have no local obstruction to being monogenic~is  $$\frac{19}{21}\cdot \frac{316}{351} \cdot \frac{965}{1026}\cdot \prod_{\scriptstyle{p\equiv 1\pmod*6}\atop\scriptstyle{p\neq7}} \left(1-\frac{2}{3(p^2+p+1)}\right) \approx 75.99\%.$$
\end{enumerate}
\end{theorem}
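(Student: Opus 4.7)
The strategy is to invoke the Delone--Faddeev correspondence between isomorphism classes of cubic rings and $\GL_2(\Z)$-equivalence classes of integral binary cubic forms, under which the form is the index form $f_K$ of the ring.  Cubic fields correspond to irreducible forms whose associated ring is maximal at every prime, and both maximality and the conditions ``$f_K$ represents a unit (resp.\ $\pm1$) in $\Z_p$'' are congruence conditions on $f$ modulo a power of $p$.  Applying Davenport--Heilbronn together with Bhargava's uniformity estimates for counting cubic fields with prescribed local behaviour, the proportion of cubic fields satisfying such conditions is a convergent Euler product $\prod_p \rho(p)$, where $\rho(p)$ is the ratio of Bhargava's local mass at $p$ of maximal cubic $\Z_p$-algebras satisfying the condition to the total local mass at $p$.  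It thus suffices to compute these local densities---call them $\alpha(p)$ for part~(a) and $\beta(p)$ for part~(b)---at every prime.

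\textbf{Part (a).}  A binary cubic $f \in \Z_p[x,y]$ represents a unit of $\Z_p$ iff its reduction $\bar f \in \F_p[x,y]$ does not vanish identically on $\P^1(\F_p)$.  Since a nonzero binary cubic has at most three distinct projective roots over $\F_p$, this can fail only when $|\P^1(\F_p)| \leq 3$, forcing $p = 2$ and $\bar f$ equal (up to scalar) to the product of the three $\F_2$-linear forms; equivalently, $2$ splits completely in $\O_K$.  Thus $\alpha(p) = 1$ for $p \neq 2$, and a direct local-mass computation of the maximal cubic $\Z_2$-algebras isomorphic to $\Z_2 \times \Z_2 \times \Z_2$ yields $\alpha(2) = 19/21$, completing (a).

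\textbf{Part (b).}  Since $f(\lambda a, \lambda b) = \lambda^3 f(a,b)$, the unit values of $f$ on $\Z_p^2 \setminus p\Z_p^2$ form a union of cosets of $(\Z_p^\times)^3$; and since $-1 = (-1)^3$ lies in $(\Z_p^\times)^3$ for every $p$, the conditions ``$f_K$ represents $1$ over $\Z_p$'' and ``$f_K$ represents $-1$ over $\Z_p$'' coincide at each individual prime and are independent of the sign ambiguity of $f_K$.  Hence the ``or'' in Definition~\ref{locobs} collapses to a single Euler product $\prod_p \beta(p)$.  At primes where $(\Z_p^\times)^3 = \Z_p^\times$ (i.e., $p = 2$ or $p \equiv 2 \pmod 3$), we have $\beta(p) = \alpha(p)$, contributing $19/21$ at $p = 2$ and $1$ at the remaining such primes.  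At the remaining primes ($p = 3$ and $p \equiv 1 \pmod 3$, where $|\Z_p^\times/(\Z_p^\times)^3| = 3$), I would stratify by the splitting type of $p$ in the maximal cubic $\Z_p$-algebra $R$; for each type, the reduction $\bar f_R$ is essentially the norm form of an \'etale cubic $\F_p$-algebra restricted to a hyperplane, whose image in $\F_p^\times/(\F_p^\times)^3$ is controlled by the Galois action on cube roots of unity.  A parameter-count of the ``bad'' $R$ (those whose index form lies entirely in non-trivial cube-cosets) is expected to yield the uniform factor $\beta(p) = 1 - 2/(3(p^2+p+1))$ for all $p \equiv 1 \pmod 6$ outside a finite exceptional set, with the exceptional contributions $316/351$ and $965/1026$ at $p = 3$ and $p = 7$ computed by direct enumeration of the cubic \'etale $\Z_p$-algebras and the cube-cosets hit by their index forms.

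\textbf{Main obstacle.}  The chief technical difficulty is the uniform local calculation at the primes $p \equiv 1 \pmod 3$: one must run through each of the splitting types at $p$, pin down precisely which cube-cosets of $\Z_p^\times$ are hit by $f_R$, and show that the ``all non-cube'' contribution integrates to exactly $2/(3(p^2+p+1))$ of the local mass---with the denominator $p^2+p+1 = |\P^2(\F_p)|$ plausibly arising from parametrizing the bad stratum by a point in a projective plane.  The exceptional factor at $p = 7$ plausibly arises from the coincidence $(\F_7^\times)^3 = \{\pm 1\}$, which collapses the cube-coset structure in a way that breaks the uniformity; the factor at $p = 3$ requires handling wild ramification together with the non-trivial cube quotient.
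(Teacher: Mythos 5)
Your framework is the same as the paper's: parametrize cubic rings by $\GL_2(\Z)$-classes of integral binary cubic forms via Delone--Faddeev, apply Davenport--Heilbronn (with the local mass formulas of Bhargava--Shankar--Tsimerman), and compute a local density at each prime. Your treatment of part (a) is essentially complete and correct: the only primitive binary cubic over $\F_p$ vanishing on all of $\P^1(\F_p)$ is $xy(x+y)$ at $p=2$, so the sole modification is to remove splitting type $(111)$ at $p=2$, and the mass ratio $19/21$ follows. Your observations that $-1$ is always a cube in $\Z_p^\times$ (so the ``or'' in Definition~\ref{locobs} is redundant) and that $\beta(p)=\alpha(p)$ whenever cubing is surjective on $\Z_p^\times$ are both correct and are exactly the reductions the paper uses.

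However, for part (b) you have written a plan, not a proof. The entire mathematical content at $p\equiv 1\pmod 6$, $p=3$, and $p=7$ is announced as ``expected to yield'' and ``plausibly arises,'' with none of the local computations actually carried out. The heart of the argument---the analogue of the paper's Lemma~\ref{localrep1}---is missing: one must show that for $p\equiv 1\pmod 6$, $p>7$, a primitive $f$ fails to represent $1$ over $\Z_p$ precisely when $f\equiv cL^3\pmod p$ with $c$ a non-cube, and that among type $(1^3)$ forms exactly $2/3$ have this property. The mechanism here is the Hasse--Weil bound applied to the genus one curve $z^3=f(x,y)$ over $\F_p$: for $p>7$, the bound $p+1-2\sqrt{p}>3$ forces a point with $z\neq 0$ whenever $\overline{f}$ is not a unit times a cube of a linear form. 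Your proposed explanation of the $p=7$ exception via $(\F_7^\times)^3=\{\pm 1\}$ is off the mark: what actually happens is that at $p=7$ the Hasse--Weil lower bound only guarantees three points, all of which may lie at $z=0$ when $f$ splits completely, and a direct check shows that the single bad $\GL_2(\F_7)$-orbit is $2xy(x+y)$. Likewise the $p=3$ factor $316/351$ requires a genuine case analysis of index forms modulo $9$, stratified by $v_3(\Disc)$ and by whether the local Galois group is $C_3$ or $S_3$ (the paper's Lemma~\ref{lem:p=3}), which you have not attempted. Finally, the suggestion that $p^2+p+1$ ``arises from parametrizing by a point of $\P^2(\F_p)$'' is a coincidence: in the paper's computation it comes from the factor $p^3-1=(p-1)(p^2+p+1)$ in the local mass of maximal cubic $\Z_p$-algebras, not from any projective-plane parametrization.
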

\noindent
Theorem~\ref{S3main} thus shows, for the first time, that a positive proportion of the
set of cubic fields in Theorem~\ref{thm2}(a) and indeed in Theorem~\ref{thm2}(b) are not monogenic.  The analogous result for cubic {orders} was proved by Akhtari and the second author~\cite{AB}, but these orders were not maximal due to the methods employed there
involving the study of Thue equations. 

While previous work on the monogenicity
of general number rings and fields has focused on solving integral Thue equations and index form equations  through techniques of  Diophantine approximation (cf.~\cite{Akhtari2,AB,Bennett,Berczes,EvertseCubic,EvertseSurvey,EG,EGbook,Gaal,Gyori,Okazaki}),  our method instead involves a careful study of rational points on related genus one curves.  
Namely, we study the index forms $f_K$ by comparing  $2$-descent and $3$-descent in the family of Mordell curves $E_k:y^2=x^3+k$, as~$k$ ranges over certain sets of integers defined by congruence conditions, as studied in \cite{leventcircle} and~\cite{j=0}, respectively. 
As a by-product, we prove the following theorem.

\begin{theorem}\label{Hasse}
When $\GL_2(\Z)$-classes of integral binary cubic forms $f(x,y)$ are ordered by absolute discriminant, a positive proportion of the genus one curves $z^3=f(x,y)$ over $\Q$ fail the Hasse principle.

The same statement remains true even if one restricts just to index forms $f_K(x,y)$ of cubic fields $K$:
when isomorphism classes of cubic fields $K$ are ordered by absolute discriminant, a positive proportion of the corresponding genus one curves $C_K \colon z^3 =  f_K(x,y)$ over $\Q$ fail the Hasse principle. 
\end{theorem}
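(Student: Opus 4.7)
The plan is to deduce Theorem~\ref{Hasse} from the proof of Theorem~\ref{S3main}, by interpreting the genus one curve $C_K : z^3 = f_K(x,y)$ as a 3-covering of its Jacobian, an elliptic curve with $j$-invariant $0$ that is a Mordell curve $E_k : y^2 = x^3 + k$ (with $k$ proportional to $\Disc(K)$). The basic dictionary is: $C_K$ has a $\Q_v$-point iff $f_K$ represents a nonzero cube in $\Q_v$, while $C_K$ has a $\Q$-point iff $f_K$ represents a nonzero cube in $\Q$ -- a condition strictly weaker than $K$ being monogenic (which requires $f_K$ to represent $\pm 1$ over $\Z$). In particular, $K$ having no local obstruction to monogenicity in the sense of Definition~\ref{locobs} ensures $C_K(\Q_v) \neq \emptyset$ for every place $v$ (the real place being automatic since $f_K$ is a cubic form), so that the class $[C_K] \in H^1(\Q, E_k)[3]$ lies in the 3-Selmer group $\Sel_3(E_k)$.

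The proof of Theorem~\ref{S3main} will in fact produce the stronger conclusion $C_K(\Q) = \emptyset$ for a positive proportion of such $K$ -- equivalently, a positive proportion of classes $[C_K]$ will be nontrivial in $\Sha(E_k)[3]$. The mechanism is the joint comparison of 2-descent and 3-descent on the family $E_k$, as studied in \cite{leventcircle} and \cite{j=0} respectively, over suitable congruence families of $k$: the 3-descent analysis of \cite{j=0} interprets $[C_K]$ as an explicit Selmer class and arranges for it to be nonzero in $\Sel_3(E_k)$, while the 2-descent analysis of \cite{leventcircle} gives an upper bound on $\rank E_k(\Q)$ strong enough to ensure that $[C_K]$ does not lie in the image of $E_k(\Q)/3 E_k(\Q)$. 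Together these say precisely that $[C_K]$ is a nontrivial class in $\Sha(E_k)[3]$, i.e., that $C_K$ fails the Hasse principle; this yields the second assertion of Theorem~\ref{Hasse}.

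The first assertion will follow from the second by a counting argument: under the Delone--Faddeev correspondence, $\GL_2(\Z)$-equivalence classes of integral binary cubic forms biject with cubic rings, and by Davenport--Heilbronn a positive proportion of cubic rings of bounded discriminant are maximal, so the index forms $f_K$ of cubic fields $K$ comprise a positive proportion of $\GL_2(\Z)$-classes of integral binary cubic forms when ordered by absolute discriminant. Combined with the second assertion, this upgrades immediately to the first.

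The main obstacle is arranging, jointly in a single positive-density family of $k$: (i) nonvanishing of $[C_K]$ in $\Sel_3(E_k)$, (ii) sufficient smallness of $E_k(\Q)$ modulo $3$, and (iii) the congruence conditions on $\Disc(K)$ that characterize cubic fields $K$ with no local obstruction to monogenicity. Each of (i)--(iii) is known, or expected, to hold with positive density individually, but the delicate point is that (i) and (ii) come from two a priori incompatible descent calculations over $E_k$ (one at the prime $3$, one at the prime $2$) that must be made to interact over a common family of congruence classes; orchestrating this compatibility, while maintaining positive density after intersection with (iii), is the heart of the argument.
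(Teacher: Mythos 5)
Your proposal is correct and follows essentially the same route as the paper: the second assertion of Theorem~\ref{Hasse} is exactly what the proof of Theorem~\ref{S3main} already yields (the classes $\alpha_K$ lie in $\Sel_{\phi_D}(E^D)\subset\Sel_3(E^D)$ by Lemma~\ref{lem:unob}, and the $2$-Selmer bound of Theorem~\ref{alpoge2Sel} forces a positive proportion of them to project to nonzero elements of $\Sha(E^D)[3]$, i.e.\ $C_K(\Q)=\emptyset$), and the first assertion follows from the second because, by Davenport--Heilbronn, maximal cubic rings have positive density among all cubic rings, i.e.\ index forms of cubic fields form a positive proportion of all $\GL_2(\Z)$-classes of integral binary cubic forms. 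One small point of precision worth noting: since $f_K$ is a form (not an affine polynomial) the condition $C_K(\Q_v)\neq\emptyset$ is ``$f_K$ represents a nonzero cube over $\Q_v$ \emph{or} $f_K$ has a nontrivial zero over $\Q_v$,'' but this does not affect the argument, as the implication actually used (no local obstruction $\Rightarrow$ local solvability everywhere) holds as you state.
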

\noindent

Our proof of Theorem~\ref{Hasse} also implies the following statement regarding the Tate--Shafarevich groups $\Sha(E_k)$ of the elliptic curves $E_k$:

\begin{theorem}\label{shatheorem}
Fix $r \geq 0$.  For a positive proportion of $k$, we have $\dim_{\F_3} \Sha(E_k)[3] \geq r$.
\end{theorem}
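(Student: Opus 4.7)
The strategy is to combine the rank-zero input from the $2$-descent results of \cite{leventcircle} with a $3$-Selmer lower bound harvested from the methods used to prove Theorem~\ref{Hasse}. The bridge is the Kummer exact sequence
\begin{equation*}
0 \longrightarrow E_k(\Q)/3E_k(\Q) \longrightarrow \Sel_3(E_k) \longrightarrow \Sha(E_k)[3] \longrightarrow 0,
\end{equation*}
which gives $\dim_{\F_3}\Sha(E_k)[3] = \dim_{\F_3}\Sel_3(E_k) - \rank E_k(\Q) - \dim_{\F_3}E_k(\Q)[3]$. Since $E_k(\Q)[3]=0$ outside a density-zero set of $k$, it suffices to produce a positive proportion of $k$ for which both $\rank E_k(\Q)=0$ and $\dim_{\F_3}\Sel_3(E_k)\geq r$.

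For the rank-zero input, the circle-method $2$-descent of \cite{leventcircle} produces a positive-density congruence family $\mathcal F$ of integers $k$ on which $\#\Sel_2(E_k)$ is trivial (and hence $\rank E_k(\Q)=0$) for a density-one subset of $k\in\mathcal F$. I would work within such an $\mathcal F$.

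For the $3$-Selmer lower bound, I would use the parameterization employed in the proof of Theorem~\ref{Hasse}: non-trivial classes in $\Sel_3(E_k)$ correspond to $\GL_2(\Z)$-orbits of integral binary cubic forms $f$, of discriminant tied to $k$, for which $z^3=f(x,y)$ is everywhere locally soluble. Extending the counting techniques of \cite{j=0}, I would carry out higher-moment computations for the number of such locally soluble cubic forms (equivalently, for $\#\Sel_3(E_k)$) over $k\in\mathcal F$, via geometry-of-numbers integrations over $r$-tuples of cubic forms with commensurable discriminant. A moment-inequality argument of Paley--Zygmund type, amplified if necessary by restricting $\mathcal F$ further using auxiliary local conditions that inflate the first moment past $3^{r+1}$, then upgrades the average lower bound to the pointwise statement $\#\Sel_3(E_k)\geq 3^r$ for a positive proportion of $k\in\mathcal F$.

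The main obstacle is the compatibility of the $2$-descent and $3$-descent inputs: the congruence conditions cutting out $\mathcal F$ in \cite{leventcircle} must be chosen so that the moments of $\#\Sel_3(E_k)$, as computed in \cite{j=0}, remain of the expected size on $\mathcal F$ and can be amplified past any threshold $3^r$. This requires decoupling the $2$-adic and $3$-adic local conditions defining $\mathcal F$, and verifying that the moment estimates survive restriction to $\mathcal F$ up to a positive constant factor. Once this compatibility is established, the conclusion $\dim_{\F_3}\Sha(E_k)[3]\geq r$ for a positive proportion of $k$ follows immediately from the Kummer sequence above.
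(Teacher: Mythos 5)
Your proposal has a genuine gap at both of its pillars, and it also misses the structural fact that makes the paper's proof short.

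First, the claimed rank-zero input is not available. Theorem~\ref{alpoge2Sel} (the result from \cite{leventcircle}) says that $\mathrm{avg}_\Sigma \#\Sel_2(E^D)=3$ for any large congruence family $\Sigma$, no matter how you choose $\Sigma$. It does \emph{not} produce a family on which $\#\Sel_2(E_k)$ is trivial for $100\%$ of $k$; indeed such a family would have $2$-Selmer average $1$, contradicting the theorem (the average of $3$ is robust under passing to any large congruence subfamily). What one actually gets from $\mathrm{avg}\,\#\Sel_2 = 3$ is the much weaker Corollary~\ref{cor:averagerankbound}: the \emph{average} rank is at most $3/2$, so by Markov a positive proportion of $k$ have small rank. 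That is the input the paper uses.

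Second, the $3$-Selmer lower bound you want would require establishing higher moments of $\#\Sel_3(E_k)$ over congruence families — a substantial new geometry-of-numbers computation, not carried out in \cite{j=0} or anywhere else — and then running a Paley--Zygmund argument. This is unnecessary, because \cite{j=0} already supplies a \emph{pointwise} lower bound: by \cite[Thm.~4(ii)]{j=0}, one has $\dim_{\F_3}\Sel_{\phi_k}(E_k)\geq m$ for \emph{every} $k\in T_m$, where $T_m$ is defined by explicit local conditions (divisibility of $k$ by squares of many primes $\equiv 2\pmod 3$). No averaging or second-moment amplification is needed. The paper's proof simply fixes $m\geq r+2$, notes that $T_m\cap\Sigma$ has positive density and that $\Sel_{\phi_k}(E_k)$ injects (up to a bounded kernel) into $\Sel_3(E_k)$, and then observes that if the image in $\Sha(E_k)[3]$ had dimension $<r$ for $100\%$ of $k\in T_m\cap\Sigma$, the Kummer sequence would force the rank of $E_k$ to be large for $100\%$ of such $k$, contradicting the average rank bound of $3/2$ from Corollary~\ref{cor:averagerankbound}. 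To repair your argument along the paper's lines, replace the (false) ``rank zero on density one'' input with the average rank bound, and replace the moment computation with the pointwise $\phi_k$-Selmer lower bound on $T_m$.
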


\noindent
In fact, our proof implies that the same statement remains true if one ranges over just the integers $k$ satisfying any finite set of congruences, or even if one ranges over those integers $k$ that are cubefree and satisfy any further finite set of congruence conditions.

\subsection*{Methods}
To prove Theorem~\ref{S3main}, we must bound how often the Thue equation $f_K(x,y) = 1$ has integer solutions as $K$ ranges over (isomorphism classes of) cubic fields ordered by absolute discriminant.
This equation defines a genus one curve $C_K:z^3=f_K(x,y)$ in~$\P^2$, and an integer solution gives rise to a rational point $P_K$ on the  Jacobian of ~$C_K$, namely the elliptic curve $E^D \colon y^2  = 4x^3 + D$ where $D = \Disc(K)$.  The class of $P_K$ in $E^D(\Q)/3E^D(\Q)$ depends only on $K$, and different cubic fields give rise to different classes. More generally, if $K$ has no local obstruction to being monogenic, then $C_K$ determines a class in the $3$-Selmer group $\Sel_3(E^D)$, and different cubic fields yield distinct Selmer classes \cite[\S1]{satge}.

The motivation and starting point for our proof is recent work of the second and third author and Elkies \cite{j=0}, where it was shown that there is a partition $\bigcup_{m \in \Z} T_m$ of $\Z$,  with each $T_m$ defined by congruence conditions, such that
 if $S$ is any ``large'' subset of $T_m$, then the average size of $\Sel_3(E^D)$, as $D$ varies over $S$, is at least $1 + 3^m$. 
 Hence if $S\subset T_m$ is a set of cubic field discriminants with $m$ large, then as $D$ ranges over $S$, $\Sel_3(E^D)$ is very large on average. 

This suggests 
that we may prove Theorem~\ref{S3main} by showing that: 

\begin{itemize}
    \item[(a)] there are many cubic fields of discriminant $D$, for many  discriminants $D\in S$, that have no local obstruction to being monogenic; and  \item[(b)] the ranks of many of the curves $E^D$ for $D\in S$ are small.
\end{itemize}
Suitably strong and coordinated versions of (a) and (b) would imply Theorem~\ref{S3main}: indeed, (a) would supply a large number of cubic fields across discriminants that have no local obstruction to being monogenic; on the other hand, these cubic fields could not all be monogenic---and thereby yield rational points on elliptic curves $E^D$---due to the ranks of many of these $E^D$ being too small by (b).

Towards (b), we apply a recent result of the first author \cite[Thm.~3.1.1]{leventcircle}---extending a result of Ruth \cite{Ruth} and using a refinement of the circle method due to Heath-Brown~\cite{Heath-Brown}---which states that the average size of the $2$-Selmer groups $\Sel_2(E^D)$, over any large set $S$ of $D$ defined by congruence conditions, is $3$, irrespective of the congruence conditions.  This gives an upper bound on the average size of $2^{\rk\, E^D}$ for $D$ in any large set $S$ defined by congruence conditions. This yields quite a  strong version of (b).

Regarding (a), 
a first natural approach is to try and apply
the asymptotic count of cubic fields due to Davenport--Heilbronn~\cite{DH}. However, this 
does not yield a strong enough version of (a) to show, in conjunction with our version of~(b), that many cubic fields (with no local obstruction to being monogenic) are not monogenic. Indeed, to obtain an upper bound on the globally soluble part of $\Sel_3(E^D)$, we would need an upper bound on the average size of~$3^{\rk\,E^D}$. But $3>2$, so our upper bound on the average size of $2^{\rk\, E^D}$ does not suffice.

To remedy this, we construct explicit large subsets $S \subset T_m$, defined by congruence conditions, with the following two properties. First, each cubic field of discriminant $D \in S$ has no local obstruction to being monogenic. Second, there is a subset $S' \subset S$ of relative density $\mu \geq 1/2$ such that there are {\it at least} $2^{m-1}$ cubic fields of each discriminant $D \in S'$.

The construction of $S$ involves two steps.  First, we use a theorem of the second author and Varma \cite{BV} guaranteeing vanishing of the $3$-parts of class groups of the fields $\Q(\sqrt{\minus3D})$ (when $D>0$) or $\Q(\sqrt{D})$ (when $D<0$) whenever $D\in S' \subset S$, where $S'$ has relative density at least $1/2$ in $S$.  We then use a higher composition law (when $D>0$) and class field theory (when $D<0$) to prove that there are {\it exactly} $2^{m-1}$ (resp.\ $3\cdot 2^{m-1}$) cubic fields of discriminant~$D$ when $D \in S'$ and $D>0$ (resp.\ $D<0$). This yields our desired version of~(a).

Now, were $100\%$ of cubic fields of discriminant $D\in T_m$ monogenic, one would find that the ranks of $E^D(\Q)$ for $D\in S'$ would all be at least $m\log_32$, since we produce at least $2 \cdot 2^{m-1}$ distinct elements $\pm P_K\in E^D(\Q)/3E^D(\Q)$ for each such $D$. This would force the average of $\#\Sel_2(E^D)$, for $D \in S$, to be at least $\mu\cdot 2^{m\log_32}\geq 2^{m\log_32 - 1}$, which for $m$ sufficiently large is strictly larger than $3$, contradicting the aforementioned~\cite[Thm.~3.1.1]{leventcircle}. This proves Theorem \ref{S3main}.

In particular, the Hasse principle thus fails for many genus one curves of the form $z^3=f_K(x,y)$ with $\Disc(K)\in S$, 
thereby proving Theorem~\ref{Hasse}. These curves represent elements of the Tate--Shafarevich group~of $E^{\Disc(K)}$, so that once $m$ is sufficiently large we also deduce Theorem~\ref{shatheorem}.  

\smallskip
This paper is organized as follows.  In \S\ref{sec:localmon}, we prove Theorem~\ref{localthm} by suitably applying the results of Davenport--Heilbronn~\cite{DH} and their refinements in \cite{BST}.  In \S\ref{2sel} and \S\ref{subsec:isogeny-selmer}, we recall from \cite{leventcircle} and \cite{j=0} the relevant definitions and results regarding $2$- and $3$-Selmer groups of the curves $E_k:y^2=x^3+k$, as $k$ varies over suitable congruence families, and we describe their connections to monogenicity.  In \S\ref{tot ram cubics}, we prove our formulas for the number of cubic fields of given  discriminant in $S'$, which we then use in \S\ref{sec:thm1pf} and \S\ref{sec:final} to compare $2$- and $3$-descent on~$E_k$, thus deducing Theorems~\ref{S3main},   \ref{Hasse},  and~\ref{shatheorem}. 

\subsection*{Acknowledgments}
The authors thank Arul Shankar for many helpful comments on earlier versions of this manuscript, and Michael Stoll for organizing Rational Points~2017 and 2019, which provided the pleasant environment where the authors first had the idea for this work. The first author also thanks David Harari, Emmanuel Peyre, and Alexei Skorobogatov for organizing the similarly invigorating Reinventing Rational Points program. The second author thanks Professor John Tate for posing this problem to him when he was in graduate school. The first author was supported by NSF grant~DMS-2002109. The second author was supported by a Simons Investigator Grant and NSF
grant~DMS-1001828. The third author was supported by the Israel Science Foundation (grant No.\ 2301/20).

\addtocontents{toc}{\protect\setcounter{tocdepth}{2}}

\section{The proportion of cubic fields that are locally monogenic, and the proportion that have no local obstruction to being monogenic}\label{sec:localmon}

Let $V(\Z) = \Sym^3\,\Z^2$ be the space of binary cubic forms over $\Z$.  Recall that there is a discriminant-preserving bijection between $\GL_2(\Z)$-orbits on $V(\Z)$ and isomorphism classes of cubic rings over $\Z$, i.e., rings that are free of rank 3 as $\Z$-modules, due to Levi \cite{Levi}, Delone--Faddeev \cite{DF}, and Gan--Gross--Savin~\cite[\S4]{GGS}.  Namely, a cubic ring $R$ corresponds to the $\GL_2(\Z)$-orbit of its index form $f_R \colon R/\Z \to \wedge^3R$. 

The number of $\GL_2(\Z)$-classes of irreducible binary cubic forms having bounded absolute discriminant satisfying certain allowable sets of congruences was determined by Davenport and Heilbronn~\cite{DH}: 

\begin{theorem}[Davenport--Heilbronn~\cite{DH}]\label{dav} 
Let $S$ be a $\GL_2(\Z)$-invariant set of integral binary cubic forms $f$ defined by congruence conditions modulo bounded powers of primes $p$ where, for sufficiently large $p$, the defining congruence conditions at $p$ exclude only a set of forms $f$ satisfying $p^2\mid \Disc(f)$. 
Let $h(S;D)$ denote the number of classes of irreducible binary cubic forms that are contained in $S$ and have discriminant $D$.  Then:

\vspace{.05in}
\begin{itemize}[label=(\alph*)]
\item[$(a)$] $\displaystyle{\sum_{-X <D < 0} h(S;D) \sim \frac{\pi^2}{24}\cdot X\cdot \prod_p\mu_p(S);}$
\item[$(b)$] $\displaystyle{\;\:\sum_{0 <D < X} \,h(S;D) \sim \frac{\pi^2}{72}\cdot X\cdot \prod_p\mu_p(S),}$
\end{itemize}
where $\mu_p(S)$ denotes the $p$-adic density of $S$ in the space of integral binary cubic forms. 
\end{theorem}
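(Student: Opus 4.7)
The plan is to count $\GL_2(\Z)$-orbits on the set of integral binary cubic forms lying in $S$ and having bounded absolute discriminant via the geometry of numbers on $V(\R) = \Sym^3\,\R^2$. One constructs an explicit fundamental domain $\mathcal{F}^{\pm}$ for the $\GL_2(\Z)$-action on each of the two open $\GL_2(\R)$-orbits $V(\R)^{\pm}$ (negative, resp.\ positive discriminant), realized as a Siegel set in $\GL_2(\R)/\GL_2(\Z)$ acting on a bounded slice of forms of fixed nonzero discriminant. The main-term count is then obtained by applying Davenport's lattice-point lemma to $\mathcal{F}^{\pm} \cap \{|\Disc| < X\}$: this produces the Euclidean volume of the region plus a lower-order error. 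Evaluating these volumes against the $\GL_2(\R)$-invariant measure yields the constants $\pi^2/24$ and $\pi^2/72$, with the factor of $3$ between the two signatures reflecting the order of the generic $\GL_2(\Z)$-stabilizer: order $3$ in the totally real case (cyclic permutation of the three real roots) and trivial in the complex case.

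A key subtlety is that each $\mathcal{F}^{\pm}$ has a cusp where the leading coefficient is small. One partitions $\mathcal{F}^{\pm}$ into a \emph{main body} and a \emph{cusp region}, and observes that the overwhelming majority of integral binary cubic forms in the cusp are reducible---they correspond to decomposable cubic rings of the shape $\Z \times (\text{order in a quadratic ring})$. The irreducible contribution from the cusp is then shown to be of strictly smaller order by a direct elementary count, so only the main-body contribution enters the leading asymptotic.

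Finally, to impose the $\GL_2(\Z)$-invariant congruence conditions defining $S$, one applies a sieve. The crucial hypothesis is that, for all sufficiently large $p$, the congruence conditions at $p$ exclude only forms with $p^2 \mid \Disc(f)$, so that the sieve input reduces to a \emph{uniform} tail bound of the shape $\#\{f \in V(\Z) : |\Disc(f)| < X, \, p^2 \mid \Disc(f)\} = O(X/p^{2-\epsilon})$ with implied constant independent of $p$. Once such an estimate is in hand, the sieve unwinds to give an additional factor of $\prod_p \mu_p(S)$ in the leading term. The main obstacle is precisely establishing this uniform tail estimate: it is the heart of the Davenport--Heilbronn argument and is the reason for the restrictive form of the hypothesis on $S$.
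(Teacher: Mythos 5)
The theorem in question is \emph{cited} by the paper (from Davenport--Heilbronn \cite{DH}, as refined in \cite{BST}) and is not reproved in the text, so there is no in-paper argument to compare against. Your sketch does follow the architecture of the genuine Davenport--Heilbronn/BST proof: geometry of numbers on a fundamental domain for $\GL_2(\Z)$ acting on $V(\R)^\pm$, Davenport's lattice-point lemma for the main term, a main-body/cusp decomposition with the observation that the cusp contributes only reducible forms up to lower order, and a sieve over primes controlled by a uniform tail estimate. That overall structure is correct.

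However, your explanation of the factor of $3$ between the two constants is wrong in a way that would matter if you tried to carry out the volume computation. You attribute it to ``the order of the generic $\GL_2(\Z)$-stabilizer: order $3$ in the totally real case (cyclic permutation of the three real roots) and trivial in the complex case.'' In fact the generic $\GL_2(\Z)$-stabilizer of an irreducible integral binary cubic form is \emph{trivial in both signatures}: a nontrivial $\GL_2(\Z)$-stabilizer corresponds to a nontrivial automorphism of the associated cubic ring, which occurs only on a density-zero set (e.g.\ the $C_3$-fields). The group responsible for the constant is the stabilizer in $\GL_2(\R)$ of a reference form in $V(\R)^\pm$, and its order is $6$ when $\Disc > 0$ (the full $S_3$, not just a $3$-cycle, since $\PGL_2(\R)$ acts sharply $3$-transitively on $\P^1(\R)$, so every permutation of the three real roots is realized) and $2$ when $\Disc < 0$ (not trivial --- the transposition swapping the complex-conjugate pair while fixing the real root is realized by a real M\"obius transformation). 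Both of your stated orders are off by a factor of $2$, so the ratio $3$ emerges by coincidence, and the argument as you describe it would not reproduce the individual constants $\pi^2/72$ and $\pi^2/24$. A minor further point: your uniform tail bound should be stated for $\GL_2(\Z)$-classes of forms (or forms in a fixed box), since the raw count of integral forms with $|\Disc(f)| < X$ is infinite; with that correction, an exponent of $2$ (rather than $2-\epsilon$) is what \cite{BST} actually obtains, though either suffices for the sieve.
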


To prove Theorem~\ref{thm2}(a) (resp.\ Theorem~\ref{thm2}(b)), we shall apply Theorem~\ref{dav} with $S$ the set of integral binary cubic forms that are index forms of a cubic field and, furthermore, represent a unit over $\Z_p$ for all $p$ (resp.\ represent~$1$ over $\Z_p$ for all $p$). Note that a binary cubic form over a ring $R$ represents $1$ over $R$ if and only if it represents $\minus1$.

\subsection{The density of index forms of cubic fields that locally represent a unit}\label{subsec:loc}
We follow the arguments of \cite[\S4]{BST}, keeping track of those integral binary cubic forms that represent a unit.  We use the following elementary lemma whose proof is immediate:

\begin{lemma}\label{localrep}
 A binary cubic form $f$ over $\Z_p$ represents a unit in $\Z_p$ if and only if $f$ is primitive and furthermore, if $p=2$, then $f(x,y)\not\equiv xy(x+y)\pmod2$.
\end{lemma}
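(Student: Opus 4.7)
The plan is to translate ``$f$ represents a unit over $\Z_p$'' into a statement about the reduction $\bar f \in \F_p[x,y]$, and then to do a simple point count on $\P^1(\F_p)$.

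First, I would note that there exists $(a,b) \in \Z_p^2$ with $f(a,b) \in \Z_p^\times$ if and only if there exists $(\bar a, \bar b) \in \F_p^2$ with $\bar f(\bar a, \bar b) \neq 0$: one direction is reduction mod $p$, and the other is the trivial lift. This reduces the entire question to whether $\bar f$ is not identically zero as a function $\F_p^2 \to \F_p$.

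If $f$ is not primitive then $\bar f \equiv 0$, so $f$ cannot represent a unit, giving one direction of the claim. Conversely, suppose $f$ is primitive, so that $\bar f$ is a nonzero binary cubic form, with at most $3$ projective zeros in $\P^1(\F_p)$. For $p \geq 3$ we have $|\P^1(\F_p)| = p+1 \geq 4$, so some projective point gives a nonzero value of $\bar f$, and hence $f$ represents a unit. For $p = 2$, the projective line $\P^1(\F_2)$ has exactly the three points $(0{:}1), (1{:}0), (1{:}1)$, and $\bar f$ vanishes on all of $\F_2^2$ precisely when all three are projective roots of $\bar f$; a direct inspection of coefficients (vanishing at $(1{:}0)$ and $(0{:}1)$ kills the $x^3$ and $y^3$ coefficients, and vanishing at $(1{:}1)$ forces the remaining two coefficients to be equal) leaves only $\bar f = xy(x+y)$ among nonzero cubics.

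Since the author describes the proof as ``immediate,'' there is no real obstacle; the only subtlety worth flagging is the separate treatment of $p=2$, where $\P^1$ is too small for the generic point count to apply. There one simply identifies $xy(x+y)$ as the unique primitive binary cubic mod $2$ that vanishes on all of $\F_2^2$, thereby recovering the stated congruence condition.
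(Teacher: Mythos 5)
Your proof is correct and is exactly the elementary argument the paper has in mind when it labels the lemma ``immediate'': reduce to asking whether $\bar f$ vanishes identically as a function on $\F_p^2$, observe that a nonzero binary cubic has at most $3$ zeros in $\P^1(\F_p)$ so is nonvanishing once $p+1>3$, and handle $p=2$ by checking directly that $xy(x+y)$ is the unique nonzero form killed on all of $\F_2^2$. Nothing to add.
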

The condition of primitivity is automatically satisfied for index forms of cubic fields. Thus the only modification to the computations of \cite[\S4]{BST} that must be made to determine the density of index forms of cubic fields that locally represent a unit is the elimination of the splitting type $(111)$ at the prime $p=2$. By \cite[Lem.~19]{BST}, the total local $p$-adic density $\mu_p(\mathcal U)$ of the set of index forms is $(p^2-1)(p^3-1)/p^5$ (which is $21/32$ for $p=2$), while by the first line of \cite[Lem.~18]{BST}, the portion of this density corresponding to the splitting type $(111)$ is $\frac16 (p-1)^2(p+1)/p^3$ (which is $2/32$ for $p=2$). Thus the Euler factor for $p=2$ in Theorem~\ref{dav} changes from $21/32$ to $19/32$. We conclude that the proportion of index forms of cubic fields that locally represent a unit is $19/21$.
We have proven Theorem~\ref{thm2}(a).

\subsection{The density of index forms of cubic fields that locally represent 1} We use the following lemma, which is a slight correction of \cite[Lem.~4.1]{AB} at the prime $p=7$:

\begin{lemma}\label{localrep1}
 A primitive binary cubic form $f$ over $\Z_2$ represents $1$ if and only if $f(x,y)\not\equiv xy(x+y)\pmod2$.
 If $p\equiv 1\pmod6$, then a primitive binary cubic form $f$ over $\Z_p$ represents~$1$ if and only if 
 $f(x,y)\not\equiv c L(x,y)^3\pmod p$, where $L$ is a linear form and $c$ is a non-cube modulo $p$, and furthermore, if $p=7$, then $f(x,y)$ is not equivalent to $2 xy(x+y)\pmod7$ under a linear change of variable.  Finally, 
 if $p\equiv 5\pmod 6$, then every primitive binary cubic form $f$ over $\Z_p$ represents~$1$. 
\end{lemma}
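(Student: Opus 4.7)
The plan is to dispatch parts (i) and (iii) by a uniform soft argument, and then to attack part (ii) by splitting its ``only if'' and ``if'' directions, the latter via case analysis on the factorization type of $f\bmod p$.

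Parts (i) and (iii) both rest on the observation that $u\mapsto u^3$ is a bijection on $\Z_p^\times$ in these regimes: for $p=2$, the group $\Z_2^\times$ is pro-$2$, so cubing is an automorphism; for $p\equiv 5\pmod 6$, $\gcd(3,p-1)=1$ makes cubing bijective on $\F_p^\times$, and this lifts to $\Z_p^\times$ by Hensel's lemma (using $p\ne 3$). Hence every $p$-adic unit is a cube. By Lemma~\ref{localrep}, a primitive $f$ represents some unit $u$ (the sole caveat being $f\equiv xy(x+y)\pmod 2$ at $p=2$; for odd $p$, primitivity alone suffices, since $\bar f$ is a nonzero form of degree $3$ on $\P^1(\F_p)$ and cannot vanish on all $p+1\ge 6$ points). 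Writing $u=v^3$ and rescaling $(x,y)\mapsto (x/v,y/v)$ then produces $f=1$.

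For the ``only if'' direction of part (ii), if $\bar f:=f\bmod p$ satisfies $\bar f\equiv cL^3\pmod p$ for a non-cube $c$, then the image of $\bar f$ on $\F_p^2$ lies in $\{0\}\cup c\cdot(\F_p^\times)^3$, which omits~$1$. For the $p=7$ exception I would tabulate $2xy(x+y)$ on $\F_7^2$ directly, observe that its values form $\{0,2,3,4,5\}$ (missing the cube class $\{1,6\}$), and note that any $\GL_2(\F_7)$-equivalent form has the same image.

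For the ``if'' direction, Hensel's lemma (using $p\ne 3$, so cubes in $\Z_p^\times$ are exactly those units reducing to cubes mod~$p$) together with the same rescaling reduces the task to exhibiting $(x_0,y_0)\in\F_p^2$ with $\bar f(x_0,y_0)$ a nonzero cube in $\F_p^\times$. I would case-analyze on the factorization type of $\bar f$: if $\bar f=cL^3$ with $c$ a cube, then $\bar f$ is itself a cube wherever $L\ne 0$; if $\bar f=L^2L'$ with $L\not\sim L'$, $\GL_2$-normalization to $cx^2y$ shows $\bar f$ attains every element of $\F_p$; if $\bar f=L\cdot Q$ with $Q$ irreducible quadratic, the substitution $u=x/y$ writes $\bar f(x,y)=y^3\cdot Q(u,1)$, and the Weil bound $|\sum_u\chi(Q(u,1))|\le\sqrt p$ for a cubic character $\chi$ forces every cube class to appear for all $p\ge 5$. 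If $\bar f$ is irreducible, the same technique with $|\sum_t\chi(\bar f(1,t))|\le 2\sqrt p$ dispatches $p\ge 19$, leaving $p=7,13$ for direct verification. Finally, if $\bar f$ has three distinct linear factors, triple-transitivity of $\GL_2(\F_p)$ on $\P^1(\F_p)$ normalizes $\bar f\sim c\cdot xy(x+y)$, which, upon setting $t=x/y$, takes the shape $cy^3\cdot t(t+1)$; the same Weil argument shows $t(t+1)$ attains every cube class of $\F_p^\times$ for $p\ge 13$, while at $p=7$ a direct enumeration shows that $t(t+1)$ on $\F_7\setminus\{0,-1\}$ hits only the cube classes of $1$ and $2$, so that $cxy(x+y)$ fails to represent a cube exactly when $c$ lies in the cube class of $2$---the exception in the statement.

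The main obstacle I anticipate is the finite verification at $p=7$ (in the irreducible and three-linear-factor strata) and at $p=13$ (irreducible stratum), where the Weil bounds are not sharp enough to conclude. These are finite computations in $\F_7$ and $\F_{13}$, but they are precisely what distinguishes the genuine $p=7$ exception from an artifact of the bound.
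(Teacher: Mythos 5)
Your proposal is correct in substance but takes a noticeably different route from the paper. The paper simply cites \cite[Lem.~4.1]{AB} and then corrects the $p=7$ oversight: it invokes the Hasse--Weil bound on the smooth genus one projective curve $C\colon z^3 = \bar f(x,y)$, which gives $\#C(\F_p)\ge p+1-2\sqrt p\ge 3$; since the points with $z=0$ are precisely the $\le 3$ roots of $\bar f$ in $\P^1(\F_p)$, a point with $z\ne0$ (hence a solution of $f(x,y)=1$) exists whenever $\bar f$ has fewer than three projective roots. This single projective count handles the irreducible, linear-times-irreducible-quadratic, and split cases simultaneously for all $p\ge7$, leaving only the split case at $p=7$ to be checked by hand --- exactly the case where the exception $2xy(x+y)$ appears. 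Your version instead normalizes $\bar f$ by factorization type and applies the Weil bound to affine character sums $\sum_t\chi(g(t))$ case by case; this is a genuine, self-contained alternative (and handles the non-squarefree cases, which are silently assumed away in the Hasse--Weil count, cleanly), but the affine bound is weaker than the projective one, so you are forced to declare finite verifications at $p=7$ (irreducible stratum) and $p=13$ (irreducible stratum) that you do not actually carry out. Those checks are easy and do succeed, but in a complete write-up they should either be executed or, better, replaced by the projective point count, which makes them unnecessary and also streamlines the split-linear case (only $p=7$ survives the bound there). Your direct analysis of $t(t+1)$ mod $7$ correctly isolates the cube class of $2$ as the unique exceptional coefficient, matching the paper's conclusion; the rest of your casework (the soft arguments for $p=2$ and $p\equiv 5\pmod 6$ via bijectivity of cubing, the ``only if'' direction via the image of $cL^3$, and the Hensel lift with rescaling) is all correct.
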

\begin{proof}
 The proof of \cite[Lem.~4.1]{AB} is correct except for a small oversight in the case $p\equiv1\pmod6$ when $p=7$. In that case, the Hasse-Weil bound shows that a smooth genus one curve $z^3=f(x,y)$ has at least 3 points over $\F_7$, but these three points 
might all satisfy $z=0$, so that $f(x,y)$ might {\it not} represent a unit cube, and therefore $1$, over $\F_7$.  Such a scenario can occur only if $f(x,y)$ splits completely over $\F_7$. All such forms over $\F_7$ are $\GL_2(\F_7)$-equivalent to $cxy(x+y)$, for some $c\in\F_7^\times$. Since $c$ can be scaled by any unit cube by scaling $x$ and $y$ by a unit, we may assume that $c = 1$, $2$, or $3$, as $\pm 1$ are the only cubes in $\F_7^\times$. Of these three possibilities, only $c=2$ yields a binary cubic form over $\F_7$ that does not represent a unit cube.
\end{proof}

We also have the following analogue of Lemma~\ref{localrep1} for index forms over $\Z_3$:
\begin{lemma}\label{lem:p=3}
The index form $f$ of a maximal cubic ring over $\Z_3$ represents $1$ over $\Z_3$ if and only if $f$ is not equivalent over $\Z/9\Z$ 
to 
$cxy(x+y)$,  $2cx^3-3xy^2+3y^3$, or $2cx^3-3x^2y+3y^3$ for $c=1$ or $2$.
\end{lemma}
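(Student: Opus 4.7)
The plan is to reduce the $\Z_3$-representation question to a finite check modulo $9$, and then enumerate the $\GL_2(\Z/9\Z)$-orbits of index forms of maximal cubic rings at $3$ case by case. For the reduction step, I would first note that $\Z_3^\times\cong\{\pm 1\}\times(1+3\Z_3)$ and expand $(1+3a)^3 = 1+9a+27a^2+27a^3$; this shows that the image of the cube map on $\Z_3^\times$ is $\{\pm 1\}\cdot(1+9\Z_3)$, i.e.\ the units $\equiv\pm 1\pmod 9$. Using $f(\lambda x,\lambda y)=\lambda^3 f(x,y)$ together with Hensel's lemma, $f$ then represents $1$ over $\Z_3$ if and only if there exists a primitive pair $(x_0,y_0)\in(\Z/9\Z)^2$ with $f(x_0,y_0)\equiv\pm 1\pmod 9$, a finite condition.

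Next, I would invoke the Delone--Faddeev bijection to classify the $\GL_2(\Z_3)$-orbits of index forms of maximal cubic rings over $\Z_3$ by the splitting type of $3$ in the \'etale algebra $K\otimes\Q_3$: unramified with shape $(3)$, $(12)$, or $(111)$; tamely ramified with shape $(1^2\,1)$; or wildly totally ramified with shape $(1^3)$. I would first dispense with every case except $(111)$ and $(1^3)$: in those, $\bar f\bmod 3$ is either irreducible, linear-times-irreducible-quadratic, or equivalent to $x^2y$, and in each subcase one directly exhibits a primitive pair $(x_0,y_0)\bmod 3$ with $f(x_0,y_0)$ a prescribed nonzero residue, and then lifts to hit $\pm 1\bmod 9$ by adjusting modulo $9$.

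In the split case $(111)$, we have $\bar f\equiv c\cdot xy(x+y)\pmod 3$ up to $\GL_2$-equivalence for some $c\in\F_3^\times$. I would enumerate the values of $xy(x+y)$ on primitive pairs $(x,y)\bmod 9$ and observe that only the residues $\{0,2,3,6,7\}\pmod 9$ occur; hence $c\cdot xy(x+y)$ attains $\pm 1\bmod 9$ if and only if $c\in\{4,5\}\bmod 9$. The scalar $\GL_2$-element $(x,y)\mapsto(-x,-y)$ multiplies $f$ by $-1$, giving the symmetry $c\sim -c$, so the failing classes reduce to $c\in\{1,2\}$ and this yields the first family of exceptions.

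The wild case is where the main work lies. Every totally wildly ramified maximal cubic extension of $\Q_3$ is of the form $\Z_3[\pi]$ for $\pi$ a root of an Eisenstein polynomial $x^3+3bx+3c$ with $c\in\Z_3^\times$, $b\in\Z_3$. I would write the index form of this order in the basis $1,\pi,\pi^2$ and normalize modulo $9$ by the $\GL_2(\Z_3)$-action to obtain a short finite list of representatives. A direct enumeration of primitive pairs modulo $9$ for each representative would then show that the only ones missing $\pm 1\bmod 9$ are exactly $2cx^3-3xy^2+3y^3$ and $2cx^3-3x^2y+3y^3$ for $c\in\{1,2\}$, while every other orbit admits a primitive pair realizing $\pm 1$. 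The hard part will be completing this orbit enumeration, since wild ramification admits more $\GL_2(\Z_3)$-orbits modulo $9$ than any other splitting type, and one must be careful not to miss an orbit or overcount equivalences between the Eisenstein representatives.
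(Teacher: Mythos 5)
Your overall strategy matches the paper's exactly: reduce the $\Z_3$-representation question to a finite check modulo $9$ using the fact that the cubes in $\Z_3^\times$ are precisely $\pm 1 + 9\Z_3$, dispense quickly with the unramified and tamely ramified splitting types, compute the split case $(111)$ explicitly, and handle the wild case $(1^3)$ by an enumeration of $\GL_2$-orbits modulo $9$. Your $(111)$ computation is correct and agrees with the paper's conclusion that $cxy(x+y)$ represents $1$ modulo $9$ exactly when $c\equiv\pm 4\pmod{9}$, so the failing scalars up to sign are $c=1,2$.

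The genuine gap is that the wild case $(1^3)$ is the substance of the lemma, and you leave it explicitly as a to-do. You name a parametrization (Eisenstein polynomials $x^3+3bx+3c$, take the index form of $\Z_3[\pi]$), assert that ``a direct enumeration\ldots{} would then show'' the claimed list of exceptional forms, and then close by noting that ``the hard part will be completing this orbit enumeration'' since ``one must be careful not to miss an orbit or overcount equivalences.'' But that is precisely the step the proof must supply; without a complete list of representatives modulo $9$ and the accompanying value-check, the lemma is not established. The paper carries out this enumeration via a cleaner bookkeeping that you should consider adopting: it subdivides wildly ramified maximal index forms by the $3$-adic valuation of their discriminant ($3$, $4$, or $5$), and in the valuation-$4$ case further by whether the Galois closure has group $C_3$ or $S_3$. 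This yields the short explicit list of representatives $c(x^3\pm 3xy^2+3y^3)$, $c(x^3\pm 3x^2y+3y^3)$, and $ax^3+3dy^3$ (with $a,d$ units), and for each the modulo-$9$ check for representing $\pm 1$ is a few lines. Either adopt this discriminant-valuation decomposition or actually complete the Eisenstein-polynomial enumeration and the equivalence bookkeeping; as written, the key computation is missing.
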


\begin{proof}
The cubes in  $\Z_3^{\times}$ are $\pm1 + 9\Z_3$, so it suffices to consider the index forms modulo~$9$.  
Forms of type $(111)$ are equivalent to $cxy(x+y)$, which represents $1$ only when $c\equiv \pm4\pmod*{9}$. Meanwhile, index forms of type $(12),$ $(3)$, and $(1^21)$ are equivalent to $x(x^2 + y^2)$, $x^3 - xy^2 +y^3$, and $x^2y$, respectively, and these all represent~$1 \pmod*{9}$.  

Index forms of type $(1^3)$ can naturally be  subdivided into $3$ subtypes based on the $3$-adic valuations of their discriminants.
Those with   valuation $3$ are equivalent to $c(x^3 + 3xy^2 + 3y^3)$ or $c(x^3  -3xy^2 + 3y^3)$, and these forms represent~$1$ unless $c$ is $\pm2$ or $\pm4$ (mod~$9$), respectively. Index forms $f$ of valuation $4$ come in two types.  
If $\Gal(f) \simeq S_3$, then, up to equivalence, $f(x,y) \equiv c(x^3 + 3x^2y + 3y^3) \pmod*{9}$, which represents $1$ (mod~$9$).  If $\Gal(f) \simeq C_3$, then up to equivalence $f \equiv c(x^3 - 3x^2y + 3y^3) \pmod 9$, which represents $1$ if and only if $c \equiv \pm1 \pmod 9$. Finally, those with valuation $3^5$ are equivalent to $ax^3 + 3dy^3$, with $a$ and $d$ units, which all represent~$1$ (mod~$9$).  
\end{proof}

To obtain the relative density at $p$ of index forms that locally represent~$1$, we use Lemmas~\ref{localrep1} and \ref{lem:p=3}. At $p=2$, we have already seen that the relative density is $19/21$. 

For primes $p\equiv 1 \pmod*{6}$ but $p\neq7$, we use the fact that a relative density of $2/3$ of index forms that have splitting type $(1^3)$ (mod $p$) do not represent~$1$ over $\Z_p$. By \cite[Lem.~18]{BST}, a density of $(p -1)^2 (p + 1) / p^5$ of binary cubic forms are index forms locally and have splitting type $(1^3)$. Thus the desired relative density of index forms that represent~$1$ over $\Z_p$, for primes $p\pmod*{6}$ and $p\neq 7$, is 
\[1-\frac23\cdot\frac{(p -1)^2 (p + 1) / p^5}{(p^2-1)(p^3-1)/p^5} =1-\frac{2}{3(p^2+p+1)} .\]

If $p=7$, we must further subtract the relative density of index forms that split completely over $\F_7$ and do not represent~$1$ over $\Z_7$, which as we have seen is $1/3$ of all forms that split completely over $\F_7$. The desired relative density of index forms that represent~$1$ over $\Z_7$ is thus
\[1-\frac{2}{3(p^2+p+1)}
- \frac13\cdot \frac{(1/6)(p -1)^2 (p + 1) / p^3}{(p^2-1)(p^3-1)/p^5} =
1-\frac{p^2+12}{18(p^2+p+1)}=\frac{965}{1026}\]
when $p=7$.

If $p\equiv 5 \pmod*{6}$, then every index form represents $1$ over $\Z_p$. 

Finally, if $p = 3$, then $1/3$ of forms of type $(111)$ represent~$1$ over~$\Z_3$ by Lemma \ref{lem:p=3}.  Among index forms of type $(1^3)$, those with discriminant of valuation $5$ have density~${1}/{9}$ since they have a triple root modulo 9, not just modulo 3, and these all represent 1 over $\Z_3$. 
Among the remaining $8/9$ of forms of type $(1^3)$, we claim that a proportion of $2/3$ represent~$1$ over $\Z_3$.  This is true for those with discriminant of valuation $3$ by Lemma~\ref{lem:p=3}. 
 There are three cubic extensions of $\Q_3$ with discriminant of valuation~$4$ with Galois group~$C_3$ and one with Galois group~$S_3$.  Thus, when weighted appropriately, there is the same proportion of each type.  
By the proof of Lemma~\ref{lem:p=3}, the proportion of these forms that represent~$1$ over $\Z_3$ is $\frac12\left(\frac13 + 1\right) = \frac23$, as claimed. Thus the desired relative density of index forms $f_K$ that represent~$1$ over $\Z_3$ is 
\[1 - \frac23\cdot \frac{(1/6)(p -1)^2 (p + 1) p^2}{(p^2-1)(p^3-1)} - \frac{1}{3}\cdot\frac{8}{9}\cdot\frac{(p-1)^2(p+1)}{(p^2 - 1)(p^3 - 1)}= 
\frac{316}{351}
\]
when $p=3$. We have proven Theorem~\ref{thm2}(b). 
\subsection{A cubic field that is locally monogenic but has a local obstruction to being monogenic}\label{subsec:locmonex}

The cubic field $K = \Q[X]/(X^3 - 7/5)$ has  ring of integers $\O_K = \Z + \Z\cdot 5\theta + \Z\cdot 5\theta^2$, where $\theta$ is the image of $X$.  The index $f_K(x,y)$ of  $x\cdot 5\theta + y\cdot 5\theta^2$ is given by $f_K(x,y) = 5x^3 - 7y^3$, which does not represent~$\pm1$ over $\F_7$, let alone over $\Z_7$ or~$\Z$.  However, $K$ is locally monogenic; for example, $\O_K \otimes_\Z \Z_7 = \Z_7[\theta]$.  Thus $K$ has a local obstruction to being monogenic, despite being locally monogenic.  

On the other hand, consider $K' = \Q[X]/(X^3 - 21)$, which has a monogenic ring of integers $\O_{K'} = \Z[\sqrt[3]{21}]$.  Then $f_{K'}(x,y) = x^3 - 21y^3$, which represents~$1$ over~$\Z_7$ and indeed over~$\Z$. Note that $\O_K \otimes_\Z \Z_7 \simeq \O_{K'} \otimes_\Z \Z_7$, but $K$ has a local obstruction to being monogenic over~$\Z_7$ while $K'$ does not.  Thus the property of having a local obstruction to being monogenic is not solely a property of the  localization $K \otimes_\Q \Q_p$, but is also a property of the localization of the global index form. 

\begin{remark}
{\em Isomorphism classes of cubic rings over a principal ideal domain $R$ (i.e., $R$-algebras that are free of rank $3$ as modules over $R$) are parametrized by orbits of binary cubic forms over $R$ under the ``twisted'' action of $\GL_2(R)$ rather than the ``standard'' action; see~\cite[Prop.~2.1]{GL}. The {\it standard} action of $\gamma\in\GL_2(R)$ on a binary cubic form $f$ with coefficients in $R$ (which we use in this paper throughout) is given~by \begin{equation}\label{gl2action}
\gamma\cdot f(x,y):=f((x,y)\gamma),
\end{equation} while the
{\it twisted} action is given by
\begin{equation}\label{gl2action2}
\gamma\star f(x,y):=\det(\gamma)^{\minus1}f((x,y)\gamma),
\end{equation} 
where we view $(x,y)$ as a row vector. 

For binary cubic forms over $R=\Z$, the standard and twisted $\GL_2(R)$-orbits are identical, but for $R=\Z_p$ they may not be. The property of whether or not a binary cubic form $f$ represents $1$ over $\Z_p$ is preserved under standard $\GL_2(\Z_p)$-equivalence but not necessarily under twisted $\GL_2(\Z_p)$-equivalence. This explains, in terms of binary cubic forms, why two cubic fields may have isomorphic completions at a prime $p$, and both may be locally monogenic at $p$, but only one may have a local obstruction to being monogenic at $p$. 

The cubic fields $K$ and $K'$ above have this property; the corresponding binary cubic forms $5x^3-7y^3$ and $x^3-21y^3$, when viewed in $\Z_7[x,y]$, are not standard $\GL_2(\Z_7)$-equivalent, and indeed represent different cube classes in $\Z_7$, but they are twisted $\GL_2(\Z_7)$-equivalent, and thus correspond to the same cubic ring over $\Z_7$.}
\end{remark}

\section{A positive proportion of cubic fields have no local obstruction to being monogenic but are not monogenic}

\subsection{$2$-Selmer groups of the elliptic curves $E^D$}\label{2sel}
Recall that the $2$-Selmer group of an elliptic curve $E$ over $\Q$ is defined by
\[\Sel_2(E) := \ker\biggl(H^1(\Q, E[2]) \to \prod_{p \leq \infty}^{\phantom{.}} H^1(\Q_p, E)\biggr)\]
and sits in an exact sequence
\[0 \to E(\Q)/2E(\Q) \to \Sel_2(E) \to \Sha(E)[2]\to 0.\]
In particular, we have $\#\Sel_2(E)\geq 2^{\rk\, E}\cdot \#E(\Q)[2]$, where $\rk \, E$ is the rank of the finitely-generated abelian group $E(\Q)$.  

Recall the elliptic curve $E^D \colon y^2 = 4x^3 + D$.  We require a bound on the average size of $\Sel_2(E^D)$ as $D$ varies through certain sets $\Sigma \subset \Z$ defined by congruence conditions.  We define the {\it average} $\avg_\Sigma(f)$ of a function $f$ on $\Sigma$ with respect to the natural density, i.e.,
\[\avg_\Sigma(f) = \lim_{X \to \infty} \frac{1}{\#\Sigma(X)}\sum_{D \in \Sigma(X)} f(D),\]
where $\Sigma(X) := \Sigma\cap [-X, X]\setminus\{0\}$.  

\begin{definition}{\em 
A set $\Sigma \subset \Z$ is {\it defined by congruence conditions} if there is a set $\Sigma_\infty\in\{\R_{>0},\R_{<0},\R^\times\}$ and for each prime $p$ there is an open and closed subset $\Sigma_p \subset \Z_p$ such that $\Sigma =  \cap_p(\Z \cap \Sigma_p)\cap\Sigma_\infty$.  If, moreover, $\Sigma \neq \emptyset$ and $$\Sigma_p \supset \{D \colon v_p(D) \leq 1\}$$ for all sufficiently large $p$, then $\Sigma$ is called {\it large}.}  
\end{definition}

The following is a special case of a theorem of the first author \cite[Thm.~3.1.1]{leventcircle}, which in turn is an extension of Ruth's Ph.D. thesis \cite{Ruth} who proved that $\mathrm{avg}_{\Z}\, \#\Sel_2(E^D)\leq 3$.

\begin{theorem}\label{alpoge2Sel}
Let $\Sigma \subset \Z$ be a large set defined by congruence conditions. Then $$\mathrm{avg}_{\Sigma}\, \#\Sel_2(E^D) = 3.$$
\end{theorem}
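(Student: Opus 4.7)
The plan is to parametrize 2-Selmer elements via integral binary quartic forms and then count them with the circle method. By the Bhargava--Shankar framework adapted to the Mordell family, the 2-Selmer group $\Sel_2(E^D)$ is in bijection with the set of $\GL_2(\Z)$-equivalence classes of everywhere locally soluble integral binary quartic forms $f(x,y) = ax^4 + bx^3y + cx^2y^2 + dxy^3 + ey^4$ whose standard invariants satisfy $I(f) := 12ae - 3bd + c^2 = 0$ and $J(f) = -27D$ (up to a fixed normalization), with the identity of $\Sel_2$ corresponding to a distinguished reducible orbit. The problem therefore reduces to an asymptotic count of $\GL_2(\Z)$-orbits on the codimension-one subvariety $\{I = 0\} \subset \A^5$, with $J$ running through $-27 \cdot \Sigma(X)$.

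To carry out the count, I would, following Ruth, fix a fundamental domain for the $\GL_2(\Z)$-action on $\{I = 0\}$ and tally the integer points in this domain with $|J| \leq 27X$. Applying Heath-Brown's smooth delta-symbol version of the circle method to impose the equation $I = 0$ is designed to produce an asymptotic formula of the shape $cX + O(X^{1-\delta})$ for this lattice count. The key extension beyond Ruth's work is to prove this asymptotic \emph{uniformly} in the congruence conditions at each prime, so that it survives the sieve needed to restrict from all $D$ to $D \in \Sigma$; the largeness hypothesis (that $\Sigma_p \supset \{D : v_p(D) \leq 1\}$ for almost all $p$) ensures that the resulting Euler product of local densities converges absolutely.

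The local densities are then evaluated prime by prime. A mass-formula computation relates the $p$-adic density of locally soluble orbits with invariants $(0, -27D)$ to the local 2-Selmer measure on $E^D$ over $\Q_p$, and the resulting Euler product evaluates to exactly $3$ times the natural density of $\Sigma$---the factor of $3$ being the same Tamagawa-style computation that underlies the Bhargava--Shankar average of $3$ for 2-Selmer groups. Dividing the total orbit count by $\#\Sigma(X)$ and letting $X \to \infty$ then yields $\mathrm{avg}_\Sigma \#\Sel_2(E^D) = 3$.

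The hardest part will be the uniform circle-method estimate with a genuine main term \emph{and} with error uniform in the moduli of the congruences defining $\Sigma$. Ruth obtained only an upper bound of $3$ (over $\Sigma = \Z$), so upgrading to equality requires bounding the contribution of the ``cusp'' of the fundamental domain, i.e.\ forms degenerating to the reducible locus of $\{I = 0\}$, and computing the archimedean volume explicitly rather than merely dominating it. Simultaneously, the error terms supplied by Heath-Brown's delta-symbol must be quantified as the sieve modulus grows, so that arbitrary large $\Sigma$ can be cut out without washing away the main term. Once these two refinements are in place, the $p$-adic evaluation of local densities and the final passage to the average are routine Tamagawa-style bookkeeping.
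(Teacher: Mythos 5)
The paper does not prove this theorem: it is cited directly as a special case of \cite[Thm.~3.1.1]{leventcircle} and used as a black box, so there is no in-paper argument to compare against. Your sketch does correctly identify the strategy of that cited work: parametrize $\Sel_2(E^D)$ by locally soluble integral binary quartics with invariant $I = 0$, count $\GL_2(\Z)$-orbits in a fundamental domain via Heath-Brown's delta-method form of the circle method (needed because $\{I = 0\}$ is a proper hypersurface rather than an open cone, so the standard geometry-of-numbers count of Bhargava--Shankar does not directly apply), and upgrade Ruth's upper bound of $3$ to an equality holding uniformly over large congruence families.

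The gap is that the proposal is a roadmap, not a proof. Every step that bears real mathematical weight is named but not executed: you do not establish a genuine main term (as opposed to an upper bound) for the count of lattice points on $\{I = 0\}$ in a fundamental domain; you do not control the cuspidal region where irreducible forms accumulate; you do not make the circle-method error term uniform as the moduli defining $\Sigma$ grow, which is the crux of passing from $\Sigma = \Z$ to arbitrary large $\Sigma$; and you do not carry out the local mass computation that actually produces the constant $3$. You acknowledge this yourself in the final paragraph, but these items are precisely the content of \cite{leventcircle}, so the write-up restates the difficulty rather than resolving it. Two smaller imprecisions worth flagging: the Bhargava--Shankar correspondence is not a naive bijection between $\Sel_2(E^D)$ and $\GL_2(\Z)$-orbits (stabilizer weights and the distinguished reducible orbit representing the identity must be tracked, and the density argument works with a local-to-global mass formula rather than a literal bijection), and for $E^D \colon y^2 = 4x^3 + D$ the relation is $J = -432D$ rather than $-27D$; your caveat ``up to a fixed normalization'' covers this, but the precise constant enters when evaluating the archimedean and $p$-adic densities.
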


\begin{corollary}\label{cor:averagerankbound}
The average rank of $E^D$, as $D$ ranges over any large set $\Sigma$, is at most $3/2$.
\end{corollary}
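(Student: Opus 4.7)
The plan is to deduce the rank bound from Theorem~\ref{alpoge2Sel} via a single pointwise inequality relating $\rk\, E^D$ to $\#\Sel_2(E^D)$, then average. Concretely, I would first invoke the exact sequence
\[0 \to E^D(\Q)/2E^D(\Q) \to \Sel_2(E^D) \to \Sha(E^D)[2]\to 0\]
quoted in \S\ref{2sel} together with the identity $\#(E^D(\Q)/2E^D(\Q))=2^{\rk\, E^D}\cdot \#E^D(\Q)[2]$ to observe the pointwise bound
\[2^{\rk\, E^D}\ \leq\ \#\Sel_2(E^D)\]
for \emph{every} $D$ (regardless of torsion, which only helps).

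Next I would record the elementary inequality $2r\leq 2^r$ for all integers $r\geq 0$, which is immediate: it is equality at $r=0,1,2$, and for $r\geq 2$ follows from $2(r+1)=2r+2\leq 2^r+2^r=2^{r+1}$ by induction. Combining this with the previous display gives the pointwise bound
\[2\cdot \rk\, E^D\ \leq\ 2^{\rk\, E^D}\ \leq\ \#\Sel_2(E^D).\]

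Finally, I would average over $D\in \Sigma(X)$ and let $X\to\infty$. Since the inequality is pointwise and every term is nonnegative, linearity of the averaging operator transfers it to the limit, and Theorem~\ref{alpoge2Sel} delivers
\[2\cdot \mathrm{avg}_{\Sigma}\, \rk\, E^D\ \leq\ \mathrm{avg}_{\Sigma}\, \#\Sel_2(E^D)\ =\ 3,\]
whence $\mathrm{avg}_{\Sigma}\, \rk\, E^D\leq 3/2$, as desired.

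There is essentially no obstacle here: once Theorem~\ref{alpoge2Sel} is granted, the corollary is a one-line deduction, and the constant $3/2$ is sharp for the linear programming bound (it is attained by the distribution $p_1=p_2=1/2$ on ranks, which saturates both constraints $\sum p_r=1$ and $\sum p_r 2^r=3$). The only mild subtlety is that averages are defined as limits over $\Sigma(X)=\Sigma\cap[-X,X]\setminus\{0\}$, so one should note in passing that pointwise inequalities pass to partial averages and hence to the limit.
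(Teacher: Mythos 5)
Your argument is correct and is precisely the paper's: the one-line chain $\avg_\Sigma(2\,\rk\,E^D)\leq\avg_\Sigma(2^{\rk\,E^D})\leq\avg_\Sigma\#\Sel_2(E^D)=3$, using the pointwise inequalities $2r\leq 2^r$ and $2^{\rk\,E^D}\leq\#\Sel_2(E^D)$ together with Theorem~\ref{alpoge2Sel}. You have merely spelled out the supporting details (the descent exact sequence, the induction for $2r\leq 2^r$, and the passage from pointwise bounds to limiting averages) that the paper leaves implicit.
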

\begin{proof}
We have $\avg_\Sigma (2\,\rk\, E^D)\leq\avg_\Sigma(2^{\rk\, E^D}) \leq \avg_\Sigma \#\Sel_2(E^D)=3$.
\end{proof}

\subsection{$3$-isogeny Selmer groups of the elliptic curves $E^D$}\label{subsec:isogeny-selmer}
Let $F$ be a field of characteristic not $2$ or $3$.  
For any nonzero $D \in F$, there is a degree-$3$ isogeny $\phi_D \colon E^D \to E^{\minus27D}$ 
whose kernel is generated by~$(0, \pm \sqrt{D})$.  More generally, if $f \in V(F)$ is a binary cubic form of discriminant~$D$, then the plane cubic curve $C_f \colon z^3  = f(x,y)$ admits the degree 3 map $\phi_f \colon C_f \to E^{\minus27D}$ given by
\[[x \colon y \colon z] \mapsto \left(\frac{\minus h(x,y)}{z^2},\frac{g(x,y)}{z^3}\right),\]
where $h(x,y) = \frac{1}{4}\left(f_{xx}f_{yy} - f_{xy}^2\right)$
is the Hessian of $f$ and $g(x,y) = f_xh_y - f_yh_x$ is the Jacobian derivative of $f$ and $h$ (see  \cite[Rem.~31]{j=0}). 
When $f(x,y) = x^2y -(D/4)y^3$, or more generally when $f$ is in the unique $\SL_2(F)$-orbit of {\it reducible} forms of discriminant $D$, we have $C_f \simeq E^D$ and we define $\phi_D = \phi_f$.  Since any $f$ is reducible over the separable closure $\bar{F}$, we see that $E^D$ is the Jacobian of $C_f$, and $\phi_f$ is a twist of $\phi_D$.  If $K/F$ is a cubic extension and $f = f_K$ is the corresponding index form, we write $\phi_K = \phi_f$ and $C_K = C_f$. 

Since $\Aut(\phi_D) = E^D[\phi_D]$, the twists of $\phi_D$ are parametrized by the $\F_3$-vector space $H^1(F, E^D[\phi_D])$.  Thus, each cubic extension $K/F$ with discriminant (in the square class of) $D$ gives rise to a pair of nonzero classes $\pm \alpha_K \in H^1(F, E^D[\phi_D])$  corresponding to the class of $\phi_K$.  There are two classes because there are two $\SL_2(F)$-orbits of discriminant $D$ in the $\GL_2(F)$-orbit of $f$. The genus one curves corresponding to $\alpha_K$ and $\minus\alpha_K$ are isomorphic, so we can unambiguously write $C_K$.

\begin{lemma}\label{distinct}
If $K$ and $K'$ are non-isomorphic cubic extensions of $F$ of the same discriminant~$D$, then $\alpha_K$ and $\alpha_{K'}$ are linearly independent in $H^1(F, E^D[\phi_D])$.
\end{lemma}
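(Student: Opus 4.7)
The plan is to show that linear dependence of the nonzero classes $\alpha_K, \alpha_{K'}$ in the $\F_3$-vector space $H^1(F, E^D[\phi_D])$ forces $K \cong K'$. Since the only nonzero scalar multiples of a vector in an $\F_3$-vector space are $\pm$ itself, linear dependence is the statement $\alpha_{K'} \in \{\pm\alpha_K\}$, equivalently the subset equality $\{\pm\alpha_{K'}\} = \{\pm\alpha_K\}$ inside $H^1(F, E^D[\phi_D])$.

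Next I would invoke the torsor interpretation recalled just before the lemma. A nonzero class $\alpha \in H^1(F, E^D[\phi_D])$ corresponds to an $F$-isomorphism class of pairs $(C,\phi)$ with $\phi \colon C \to E^{\minus27D}$ a degree-$3$ map that becomes isomorphic to $\phi_D$ over $\bar F$. By the explicit formula for $\phi_f$ given in the excerpt (cf.\ \cite[Rem.~31]{j=0}), every such pair is $(C_f, \phi_f)$ for some binary cubic form $f$ over $F$ of discriminant exactly $D$, uniquely determined up to the $\SL_2(F)$-action on $f$; and passing from $\alpha$ to $\minus\alpha$ corresponds to switching to the other of the two $\SL_2(F)$-orbits comprising a single $\GL_2(F)$-orbit of discriminant-$D$ forms (the two orbits being interchanged by, e.g., $\mathrm{diag}(\minus1,1)$). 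Hence the unordered pair $\{\pm\alpha_K\}$ corresponds bijectively to the $\GL_2(F)$-orbit of $f_K$, and the assumption $\{\pm\alpha_{K'}\} = \{\pm\alpha_K\}$ says precisely that $f_{K'}$ is $\GL_2(F)$-equivalent to $f_K$. Applying the Delone--Faddeev--Gan--Gross--Savin correspondence over $F$ between $\GL_2(F)$-orbits of non-degenerate binary cubic forms and isomorphism classes of \'etale cubic $F$-algebras, we conclude $K'\cong K$, contradicting the hypothesis.

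The main obstacle is the bookkeeping step in the middle paragraph: verifying that the parameterization $\alpha \leftrightarrow [f]_{\SL_2(F)}$ is indeed a bijection and that the $\pm$ ambiguity in $\alpha$ matches exactly the $\GL_2(F)/\SL_2(F)$ ambiguity in $f$ at the level of discriminant-$D$ representatives. This is a standard but somewhat fiddly descent computation, tracking how $\Aut(\phi_D) = E^D[\phi_D]$ acts on twisting cocycles and comparing that action against the $\SL_2(F)$- and $\GL_2(F)$-symmetries of the forms $f$ via the explicit $\phi_f$; it is treated carefully in \cite{j=0} and \cite[\S1]{satge}, so in practice this step reduces to a careful citation.
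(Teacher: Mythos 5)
Your argument is correct, but it takes a different route than the paper's. The paper's proof is shorter and more clever: it restricts the cohomology classes along the field extension $F \hookrightarrow K$, observes that $f_K$ becomes reducible over $K$ (since $K \simeq F[x]/(f_K(x,1))$) so $\alpha_K$ restricts to the trivial class, while $f_{K'}$ stays irreducible over $K$ (because $K' \not\simeq K$ and both are degree $3$) so $\alpha_{K'}$ restricts to a nontrivial class; linearity of restriction then forces $\alpha_{K'} \notin \{\pm \alpha_K\}$. Your proof instead works directly over $F$, arguing that $\{\pm\alpha_K\} = \{\pm\alpha_{K'}\}$ forces $f_K$ and $f_{K'}$ into the same $\GL_2(F)$-orbit, whence $K \cong K'$ by Delone--Faddeev--Gan--Gross--Savin.

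The trade-off is in what each proof demands of the parametrization. Your route requires the full injectivity of the map from $\SL_2(F)$-orbits of discriminant-$D$ binary cubic forms to $H^1(F, E^D[\phi_D])$ (so that the unordered pair $\{\pm\alpha_K\}$ really does recover the $\GL_2(F)$-orbit of $f_K$); this is true and is treated in \cite[\S1]{satge} and \cite{j=0}, but it is the ``fiddly'' step you yourself flag, and it is genuinely more than the paper uses. The paper's restriction argument needs only the much weaker fact that, over any field, the trivial class in $H^1$ corresponds precisely to the unique reducible $\SL_2$-orbit --- i.e., injectivity at the origin --- applied over the auxiliary field $K$. In exchange, your argument avoids the base-change step and makes the mechanism transparent: non-isomorphic fields have inequivalent index forms, full stop. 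Both are valid; the paper's is the more economical, yours the more structural. One small point to state explicitly if you wrote this up: you should note at the outset that $\alpha_K, \alpha_{K'}$ are nonzero (this follows since $f_K, f_{K'}$ are irreducible over $F$, being index forms of fields), since for zero vectors ``linear dependence'' would be automatic and your reduction to $\alpha_{K'} \in \{\pm\alpha_K\}$ would not go through.
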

\begin{proof}
Recall that $K \simeq F[x]/(f_K(x,1))$.  Thus $f_K$ is reducible over $K$, while $f_{K'}$ stays irreducible over $K$. Since the reducible orbit corresponds to the trivial class, this means $\alpha_K$ is in the kernel of $H^1(F, E^D[\phi_D]) \to H^1(K, E^D[\phi_D])$, while $\alpha_{K'}$ is not.
\end{proof}

\noindent
Finally, we remark that the natural map $H^1(F, E^D[\phi_D]) \to H^1(F, E^D)$ sends the class of $\phi_f$ to the class of the twist $C_f$ of $E^D$.  

Now take $F = \Q$. The $\phi_D$-Selmer group is defined by
\[\Sel_{\phi_D}(E^D) := \ker\biggl(H^1(\Q, E^D[\phi_D]) \to \prod_p^{\phantom{.}} H^1(\Q_p, E^D)\biggr).\]
The classes $\pm\alpha_K$ lie in $\Sel_{\phi_D}(E^D)$ if and only if $C_K(\Q_p) \neq \emptyset$ for all primes $p$.

The inclusion $E^D[\phi_D] \subset E^D[3]$ induces a map $\Sel_{\phi_D}(E^D) \to \Sel_3(E^D)$, which is injective if $\ker(\hat\phi_D)(\Q) = 0$, i.e., if $\minus27D$ is not a perfect square \cite[Eq.~(22)]{j=0}.  
From the exact sequence
\[0 \to E^D(\Q)/3E^D(\Q) \to \Sel_3(E^D) \to \Sha(E^D)[3] \to 0,\]
we see that any class in $\Sel_{\phi_D}(E^D)$ such that $C_f(\Q) \neq \emptyset$ lies in the subgroup  $E^D(\Q)/3E^D(\Q)$.  In particular, if $\minus27D$ is not a perfect square and $K$ is a monogenic cubic field of discriminant~$D$,  then $E^D$ has rank at least $1$.

\begin{remark}{\em 
If $D = \minus27n^2$, then the kernel of $\Sel_{\phi^D}(E^D) \to \Sel_3(E^D)$ has size $3$, generated by the class of $\phi_f$ with $f(x,y) = x^3 +ny^3$. Its Hessian $h$ is $9nxy$, and hence   $\phi_f(1 \colon 0 \colon 1)$ is the rational 3-torsion point $(0,27n)$ on $E^{\minus27D} =  E^{3^6n^2}$. In this special case, $E^D$ can have rank~$0$ despite there being a monogenic cubic field of discriminant $D$; this occurs, for example, when $n = 5$.}   
\end{remark}

For any large subset $\Sigma \subset \Z$, the second and third authors and Elkies computed the average size of $\Sel_{\phi_D}(E^D)$ for $D \in \Sigma$ in \cite{j=0}. Unlike Theorem \ref{alpoge2Sel} for the 2-Selmer group, this average size is sensitive to the congruence conditions defining $\Sigma$. However, one can partition $\Z$ into sets $T_m$ ($m \in \Z$) such that each $T_m$ has positive density and, for any large subset $\Sigma \subset T_m$, the average size $\avg_\Sigma \#\Sel_{\phi_D}(E^D)$ is independent of $\Sigma$.

\begin{theorem}[\cite{j=0}]\label{thm:phiselavg}
Fix $m \in \Z$ and let $\Sigma \subset \Z$ be a large set contained in $T_m$.  Then  $$\avg_\Sigma\#\Sel_{\phi_D}(E^D)=1 + 3^m.$$
\end{theorem}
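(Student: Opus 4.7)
The plan is to parametrize $\Sel_{\phi_D}(E^D)$ in terms of integral binary cubic forms of discriminant $D$ that are everywhere locally soluble, and then invoke the Davenport--Heilbronn asymptotic (Theorem \ref{dav}, together with its refinements in \cite{BST}) to take the average over $D \in \Sigma$. From the discussion preceding the statement, every nonzero class in $\Sel_{\phi_D}(E^D)$ has the form $\pm\alpha_K$ for some cubic field $K/\Q$ of discriminant $D$ whose associated plane curve $C_K \subset \P^2$ has a point over every $\Q_p$, and conversely any such $K$ contributes a pair of nonzero Selmer classes; by Lemma \ref{distinct}, non-isomorphic $K$'s give linearly independent $\alpha_K$'s. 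Hence
\[
\#\Sel_{\phi_D}(E^D) \;=\; 1 + 2\,N(D),
\]
where $N(D)$ counts isomorphism classes of cubic fields of discriminant $D$ with $C_K$ everywhere locally soluble, and the theorem reduces to showing $\avg_\Sigma N(D) = 3^m/2$ for any large $\Sigma \subset T_m$.

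To compute this average, I would translate the condition ``$C_K(\Q_p) \neq \emptyset$'' into a $\GL_2(\Z_p)$-invariant congruence condition on the index form $f_K$ modulo a bounded power of $p$, and observe that at primes $p$ outside a finite set depending on $D$ only through its splitting type, this condition is automatic. The forms being counted thus cut out a $\GL_2(\Z)$-invariant subset $S \subset V(\Z)$ of the kind to which Theorem \ref{dav} applies. Applying Theorem \ref{dav} separately for $D > 0$ and $D < 0$, and dividing by the corresponding asymptotic for $\#\Sigma(X)$ (via the same theorem applied to $V(\Z)$ itself), the archimedean prefactors cancel and one obtains
\[
\avg_\Sigma N(D) \;=\; \prod_p \frac{\mu_p(S \cap \Sigma_p)}{\mu_p(\Sigma_p)}.
\]
This matches the shape of a Tamagawa-style local-global mass formula for the orbit space of binary cubic forms of fixed discriminant, with the local solubility condition on $C_K$ cutting out a sub-mass at each prime.

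The heart of the proof is then a prime-by-prime computation of this local factor. The sets $T_m$ are engineered by congruence conditions at $2$, at $3$, and at primes of bad reduction (essentially primes ramified in $\Q(\sqrt{-3D})$) so that the Euler product above equals precisely $3^m/2$ for any large $\Sigma \subset T_m$; combined with the $+1$ from the trivial class and the factor $2$ from $\pm\alpha_K$, this yields the claimed average $1 + 3^m$. At any prime $p \neq 2, 3$ where $D$ is a $p$-adic unit, a short local calculation shows the ratio $\mu_p(S\cap\Sigma_p)/\mu_p(\Sigma_p)$ is $1$, so the Euler product concentrates at the primes that define $T_m$. The main obstacle I expect is the local analysis at $p = 3$: here $E^D[\phi_D]$ is a twist of $\mu_3$ by $\Q(\sqrt{-3D})$, so the solubility of $C_K$ at $3$ couples the $3$-adic valuation of $D$ with the splitting of $3$ in that quadratic field, and carrying out the case analysis while maintaining uniformity strong enough to Euler-product over the infinite family of primes that can divide $D$ is the delicate part. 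The refined tail estimates in \cite{BST} are precisely what make this Euler product rigorous.
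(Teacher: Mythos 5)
This theorem is cited from \cite{j=0}; the present paper gives no proof, so I am comparing your sketch against what the cited paper must contain.

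Your overall framework (parametrize $\phi_D$-Selmer elements by binary cubic forms of discriminant $D$, count orbits by a geometry-of-numbers argument, reduce to an Euler product of local densities, and then compute that product on $T_m$) is the correct shape of argument, and your closing remarks about where the difficulty lies --- the prime $3$, and uniformity over the infinitely many primes dividing $D$ --- are on target. But there are two substantive gaps.

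First, the parametrization step is wrong as stated. You assert that every nonzero class in $\Sel_{\phi_D}(E^D)$ has the form $\pm\alpha_K$ for a cubic \emph{field} of discriminant exactly $D$. But the text you are citing says ``cubic extension $K/F$ with discriminant \emph{in the square class of} $D$,'' and over $\Q$ the cohomology classes in $H^1(\Q, E^D[\phi_D])$ are parametrized not by cubic fields of discriminant $D$ but by $\SL_2(\Q)$-orbits of rational binary cubic forms of discriminant exactly $D$. When one passes to the Selmer group and to integral representatives (as one must to count via geometry of numbers), the natural objects are $\SL_2(\Z)$-orbits of \emph{integral} binary cubic forms of discriminant $D$, i.e., cubic \emph{rings} of discriminant $D$, including non-maximal orders. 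This is not a negligible correction here: membership in $T_m$ for large $m$ forces $D$ to have a large square factor $n^2$ with $n$ divisible by many primes, so $D$ is never a fundamental cubic-field discriminant and non-maximal rings of discriminant $D$ abound. Counting only maximal rings (cubic fields of discriminant $D$) systematically undercounts the Selmer group, and the factor $3^m$ cannot come out of that restricted count.

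Second, the appeal to Theorem \ref{dav} (Davenport--Heilbronn/BST) is not legitimate as formulated, because the congruence conditions defining $T_m$ are not of the allowed type: Theorem \ref{dav} permits, at large $p$, only conditions that exclude a subset of $\{ p^2 \mid \Disc f \}$, whereas membership in $T_m$ for $m$ large positively \emph{requires} divisibility by squares of infinitely many possible primes in a coupled way. The actual proof in \cite{j=0} is a self-contained Bhargava-style count of $\SL_2(\Z)$-orbits with local conditions at every prime (encoding both integrality of the form and local solubility of the associated genus-one curve), together with a careful uniformity/tail estimate that cannot be reduced to Theorem \ref{dav}, and then a genuinely combinatorial identity --- driven by how the twist $E^D[\phi_D]$ of $\mu_3$ behaves at primes $p \equiv 2 \pmod 3$ dividing the square part of $D$ --- that produces the value $1 + 3^m$. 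You identify the right obstacles but do not supply the ideas that overcome them.
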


The sets $T_m$ are defined by infinitely many congruence conditions; we refer to \cite{j=0} for the exact definition. The key point is that in order for $D$ to lie in $T_m$, it must be divisible by $n^2$ for some $n$ divisible by at least $|m|-1$ primes $p \equiv 2 \pmod* 3$.  

This suggests that, to prove the existence of many non-monogenic cubic fields, one might focus on cubic fields of discriminant $D = dn^2$, with $n$ divisible by many primes $p \equiv 2\pmod* 3$.    

\subsection{Construction of cubic fields ramified at a fixed set of primes with no local obstruction to being monogenic}
\label{tot ram cubics}
To prove Theorem \ref{S3main}, we construct a subset $\Sigma \subset \Z$ for which we can guarantee the following: 

\begin{enumerate}
    \item[(i)] a cubic field $K$ with $\Disc(K) \in \Sigma$ has no local obstruction to being monogenic; and 
    \item[(ii)] for a positive proportion of $D \in \Sigma$, there are ``many'' cubic fields of discriminant $D$.   
\end{enumerate}

\vspace{.02in}
\noindent
To construct such $\Sigma$, we choose $t$ distinct primes $p_1, \ldots, p_t$ each congruent to $2 \pmod* 3$, and set $n  = 3\cdot \prod_{i = 1}^t p_i$. 
It will be convenient to assume $p_1 = 2$, and we do so. For Theorem \ref{S3main} we will simply take $t = 2$ and $n = 2 \cdot 3 \cdot 5 = 30$, but for Theorem \ref{shatheorem} we will take $t$ to be arbitrarily large.

Define the set of integers
\[
\Sigma_n = \left\{\minus27dn^2 \in \Z \colon  d \mbox{ fundamental}, \, \left(\frac{d}{7}\right) \neq 1,  \mbox{ and } \left(\frac{d}{p}\right) = 1 \, \mbox{ for all } \, p \mid n \right\},
\]
where $\Bigl(\frac{d}{p}\Bigr)$ denotes the Kronecker symbol. Note that $\Sigma_n$ has positive natural density.

\begin{lemma}\label{lem:unob}
If $K$ is a cubic field and $\Disc(K) \in \Sigma_n$, then $f_K$ represents~$1$ over~$\Z_p$ for all~$p$, i.e., $K$ has no local obstruction to being monogenic.
\end{lemma}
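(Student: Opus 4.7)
The plan is to verify, one prime at a time, that $f_K$ represents~$1$ over $\Z_p$, by applying the local representation criteria of Lemmas~\ref{localrep1} and~\ref{lem:p=3}. For each $p$, the splitting type of $p$ in $K$ will be constrained (sufficiently for the criteria) by the valuation $v_p(\Disc(K)) = v_p(\minus 27\, d\, n^2)$, which controls ramification, together with, for odd unramified $p$, the Kronecker symbol $\bigl(\frac{\Disc(K)}{p}\bigr)$, which via the quadratic resolvent $\Q(\sqrt{\Disc(K)})$ distinguishes the totally split type $(111)$ from the inert type $(3)$.

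The easy primes come first. Every prime $p \equiv 2 \pmod 3$ is handled by Lemma~\ref{localrep1}, since any primitive cubic form over such a $\Z_p$ already represents~$1$; this disposes of all odd $p_i$ appearing in $n$ as well as every $p \equiv 5 \pmod 6$ coprime to $n$. For a prime $p \equiv 1 \pmod 6$ with $p \neq 7$ and $p \nmid n$, the fundamental-discriminant hypothesis forces $v_p(d) \in \{0,1\}$, so $v_p(\Disc(K)) \le 1$; thus the splitting type at $p$ is not $(1^3)$, $f_K \not\equiv cL^3 \pmod p$, and Lemma~\ref{localrep1} applies.

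What remains are $p = 2, 3, 7$, which is exactly where the three Kronecker-symbol hypotheses defining $\Sigma_n$ come into play. The condition $\bigl(\frac{d}{2}\bigr) = 1$ forces $d \equiv 1 \pmod 8$, so $d$ is odd and $v_2(\Disc(K)) = 2 v_2(n) = 2 > 0$; hence $2$ is ramified in $K$, its splitting type is not $(111)$, and Lemma~\ref{localrep1} applies. The condition $\bigl(\frac{d}{3}\bigr) = 1$ forces $3 \nmid d$, giving $v_3(\Disc(K)) = 3 + 2 + 0 = 5$ and placing us in the final case of Lemma~\ref{lem:p=3} (forms equivalent to $a x^3 + 3 e y^3 \pmod 9$ with $a, e \in \Z_3^\times$), all of which represent~$1$. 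For $p = 7$, a short computation gives $\bigl(\frac{\minus 27}{7}\bigr) = \bigl(\frac{\minus 1}{7}\bigr)\bigl(\frac{3}{7}\bigr) = (\minus 1)(\minus 1) = 1$, whence $\bigl(\frac{\Disc(K)}{7}\bigr) = \bigl(\frac{d}{7}\bigr)$; the hypothesis $\bigl(\frac{d}{7}\bigr) \neq 1$ therefore prevents $7$ from splitting in $\Q(\sqrt{\Disc(K)})$, which excludes splitting type $(111)$ at~$7$ in $K$. Combined with $v_7(d) \le 1$ (which excludes $(1^3)$), the only remaining possible splitting types at $7$ are $(3), (12), (1^21)$, whose reductions modulo $7$ are neither of the form $cL^3$ nor equivalent to $2xy(x+y)$, so Lemma~\ref{localrep1} closes this last case.

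The main obstacle is the $p = 7$ step: one must correctly set up the Kronecker-symbol identity $\bigl(\frac{\Disc(K)}{7}\bigr) = \bigl(\frac{d}{7}\bigr)$ so that the single extra hypothesis $(d/7) \neq 1$ precisely rules out the only possible obstruction at $7$ (namely splitting type $(111)$ with leading coefficient in the bad cube class). Everything else is direct bookkeeping on top of the local criteria already established in \S\ref{sec:localmon}.
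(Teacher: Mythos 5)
Your proof is correct and takes essentially the same approach as the paper: apply Lemmas~\ref{localrep1} and~\ref{lem:p=3} prime by prime, translating the defining conditions of $\Sigma_n$ into the local representability criteria (you simply spell out the bookkeeping in more detail). One small inaccuracy in phrasing: the opening claim that ``any primitive cubic form over such a $\Z_p$ already represents~$1$'' for $p\equiv 2\pmod 3$ is false at $p=2$ (Lemma~\ref{localrep1} imposes a genuine condition there), but since you handle $p=2$ separately via $2\mid\Disc(K)$, the argument is unaffected.
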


\begin{proof}
This follows from Lemmas \ref{localrep1} and \ref{lem:p=3}. For $p\equiv5 \pmod*6$ there is nothing to check. If $p\equiv1\pmod*3$ then $p^2 \nmid \Disc(f_K)$, and so $f_K$ represents $1$ over $\Z_p$ for all such $p > 7$.  The $3$-adic and $7$-adic conditions imposed on $d$ guarantee that $f$ represents 1 over $\Z_3$ and $\Z_7$.  Finally, $f_K$ represents $1$ over $\Z_2$ since it is primitive and $2 \mid \Disc(f_K)$. 
\end{proof}

Let $\Sigma_n^\pm$ be the subset of $D \in \Sigma_n$ such that $\pm D > 0$. The next proposition shows that there are many cubic fields per discriminant in $\Sigma_n^\pm$, on average.  

\begin{proposition}\label{prop:averagecount}
Let $\Sigma_n^\pm(X) = \Sigma_n^\pm \cap [\minus X,X]$, and let $N(\Sigma_n^\pm,X)$ be the number of cubic fields with discriminant in $\Sigma_n^\pm(X)$. Then
\begin{align*}
\lim_{X \to \infty} \dfrac{N(\Sigma_n^\pm, X)}{\#\Sigma_n^\pm(X)} &=(2 \mp1) 2^t, 
\end{align*}
\end{proposition}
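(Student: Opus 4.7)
The plan is to combine a class-field-theoretic parametrization of cubic fields (via Hasse's theorem for $D<0$, via a higher composition law for $D>0$) with the precise averaging theorem of Bhargava--Varma \cite{BV} for $3$-class groups of imaginary quadratic fields in congruence families. First I associate to $D=-27dn^2\in\Sigma_n^\pm$ the imaginary quadratic field $F' = \Q(\sqrt d)$ if $D>0$ (equivalently $\Q(\sqrt{-3D})$) or $F' = \Q(\sqrt{-3d})$ if $D<0$ (equivalently $\Q(\sqrt D)$); in either case $F'$ is the field to which \cite{BV} applies. Using Hasse's theorem when $D<0$ (so that $F'$ is the cubic resolvent of every cubic field $K$ with $\Disc K = D$) or the higher composition law when $D>0$ (passing to the mirror imaginary quadratic field $F'$), the cubic fields of discriminant exactly $D$ are parametrized, up to the involution $\chi\mapsto\chi^{-1}$, by $\sigma$-antiinvariant order-$3$ characters of a specific ray class group $\Cl_{\mathfrak c}(F')$ of conductor exactly $\mathfrak c$, where $\mathfrak c$ is a $\sigma$-invariant modulus determined by $n$.

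Next I analyze the antiinvariant $3$-torsion of $\Cl_{\mathfrak c}(F')$ prime-by-prime. The congruences $(d/p)=1$ for $p\mid n$, combined with $p\equiv 2\pmod 3$ and quadratic reciprocity, force each such $p$ to be \emph{inert} in $F'$. Since $p\equiv 2\pmod 3$ implies $3\mid p^2 - 1$, each inert $p_i$ contributes exactly one $\mathbb F_3$-line to the antiinvariant $3$-torsion of $\Cl_{\mathfrak c}(F')/\Cl(F')[3]$, on which Frobenius (i.e., $\sigma$) acts by $-1$ and hence antiinvariantly. The behavior at $3$ is qualitatively different in the two sign regimes --- $3$ splits in $F'$ when $D>0$ but ramifies when $D<0$ --- and a careful local analysis at $3$ (invoking the higher composition law when $D>0$, where naive class field theory does not produce the correct conductor) together with inclusion--exclusion on the exact-conductor condition produces the sign-dependent factor $(2\mp 1)$.

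Combining these contributions yields a closed-form expression
\[
\#\{K:\Disc(K) = D\} = (2\mp 1)\cdot 2^t \cdot G(\Cl(F')[3])
\]
for an explicit non-negative function $G$ with $G(\mathbf 0)=1$; in particular, on the density-$\ge 1/2$ subset of $\Sigma_n^\pm$ where $\Cl(F')[3]$ vanishes (by \cite{BV}), the count is \emph{exactly} $(2\mp 1)\cdot 2^t$, matching the ``exact'' count cited in the introduction. Summing this formula over $D\in\Sigma_n^\pm(X)$ and dividing by $\#\Sigma_n^\pm(X)$ reduces the problem to computing $\avg_{\Sigma_n^\pm} G(\Cl(F')[3])$, which the \emph{exact} averaging statement of Bhargava--Varma (not merely the density corollary used elsewhere in the paper) identifies as $1$, giving the claimed limit. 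The main technical obstacle is the local analysis at the prime $3$: one must ensure that the counted characters correspond to cubic fields of discriminant \emph{exactly} $D$, and this is precisely where the two sign regimes diverge, producing the factor $(2\mp 1)$ and forcing the substitution of the higher composition law for class field theory when $D>0$.
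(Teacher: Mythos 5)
Your route diverges from the paper's at the root. The paper's (deliberately terse) proof is a direct Davenport--Heilbronn local-density computation: it cites \cite[Thm.~8]{BST} and, in principle, computes the local densities of index forms whose discriminant lies in the $p$-adic sets defining $\Sigma_n^\pm$, then divides by the density of $\Sigma_n^\pm$ itself. Your approach instead tries to produce an \emph{exact} formula for $\#\{K : \Disc K = D\}$ for every $D \in \Sigma_n^\pm$ and then average that formula. These are genuinely different strategies, and yours has two gaps as written.

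First, you assert a closed form $\#\{K : \Disc K = D\} = (2\mp1)\cdot 2^t\cdot G(\Cl(F')[3])$ with $G$ a function of the abstract group $\Cl(F')[3]$ alone. This is not established, and it is doubtful in this generality. The paper proves an exact count (Propositions~\ref{prop:compositionlaws} and \ref{prop:cft}) only under the hypothesis $\Cl(F)[3]=0$, precisely because without it the ring class group $\Cl(\O_D)$ sits in an extension $1 \to B \to \Cl(\O_D) \to \Cl(F) \to 1$ with $B = (\O_F/3n)^\times/(\Z/3n)^\times$ (mod units), and the $3$-rank of $\Cl(\O_D)$ --- hence the cubic-field count after M\"obius inversion over conductors --- depends on the connecting map $\Cl(F)[3] \to B/3B$, not merely on $\Cl(F)[3]$ up to isomorphism. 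Two discriminants with isomorphic $\Cl(F)[3]$ can have different numbers of cubic fields.

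Second, even granting such a $G$, the step $\avg_{\Sigma_n^\pm} G(\Cl(F')[3]) = 1$ is not a consequence of any Bhargava--Varma statement. Their ``exact averaging'' theorem gives $\avg\#\Cl(F')[3]$ (namely $2$ in the imaginary quadratic case and $4/3$ in the real case, in congruence families), which is a single linear statistic. Your $G$ must be non-constant (otherwise the hypothesis $\Cl(F)[3]=0$ would be unnecessary in Propositions~\ref{prop:compositionlaws} and \ref{prop:cft}) and must satisfy $G(\mathbf 0)=1$; but a linear $G = a + b\cdot\#\Cl[3]$ with $G(\mathbf 0)=1$ and $\avg G = 1$ forces $b=0$, so $G$ cannot be linear, and no averaging theorem for nonlinear statistics of the $3$-class group is available. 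The Davenport--Heilbronn route the paper points to avoids both problems: it computes the average directly from local densities without ever needing a per-discriminant formula.
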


\noindent Proposition~\ref{prop:averagecount}  may be deduced from \cite[Thm.~8]{BST}; we omit the details, as it is only suggestive of how to proceed but will not be used  directly in what follows.

As explained in the introduction, average results such as Proposition \ref{prop:averagecount} are not strong enough for our purposes.  In the next two propositions, we provide exact formulas for the number of cubic fields of discriminant $D \in \Sigma_n$ for specific values of $D$, namely those $D$ for which the 3-torsion in the class group of $\Q(\sqrt{D^*})$ is trivial, where $D^* = D$ or $\minus3D$ depending on whether $D< 0$ or $D > 0$.
Moreover, we prove that the average values in Proposition~\ref{prop:averagecount} are actually attained on the nose for these individual discriminants.

\begin{proposition}\label{prop:compositionlaws}
Let $D \in \Sigma_n^+$ and write $F = \Q(\sqrt{\minus3D})$. If $\Cl(F)[3] = 0$, then there are exactly $2^t$ cubic fields of discriminant $D$.
\end{proposition}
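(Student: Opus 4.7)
The plan is to parametrize cubic fields $K$ with $\Disc(K)=D$ via class field theory over the real quadratic resolvent $F' = \Q(\sqrt{D}) = \Q(\sqrt{-3d})$ and to count them using Scholz reflection against the imaginary field $F = \Q(\sqrt{d}) = \Q(\sqrt{-3D})$. The congruence $(d/7)\neq 1$ excludes $d=-3$, so $D=-27dn^2$ is never a perfect square; thus every $K$ with $\Disc(K)=D$ is non-Galois, with Galois closure $L/\Q$ of type $S_3$, and $L/F'$ cyclic of degree $3$. By class field theory and the conductor-discriminant formula, cubic fields of discriminant $D$ correspond bijectively to unordered pairs $\{\chi,\chi^{-1}\}$ of order-$3$ characters of a ray class group $\Cl_{\mathfrak{f}}(F')$ satisfying $\chi^\sigma = \chi^{-1}$ (where $\sigma$ generates $\Gal(F'/\Q)$; this is equivalent to $\Gal(L/\Q)\cong S_3$ rather than $C_6$) and having exact conductor $\mathfrak{f}$.

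I first determine $\mathfrak{f}$. From $d$ odd fundamental with $d\equiv 1\pmod 3$ one gets $\Disc(F') = -3d$, so $N_{F'/\Q}(\mathfrak{f}) = (3n)^2$; since each $p_i\equiv 2\pmod 3$ is inert in $F'$ (as $(-3/p_i) = -1$) and $3$ ramifies as $(3)=\mathfrak{p}_3^2$, matching norms forces $\mathfrak{f} = \mathfrak{p}_3^4\prod_{i=1}^t (p_i)$. A direct computation in $F'_{\mathfrak{p}_3} = \Q_3(\sqrt{-3})$ with uniformizer $\pi = \sqrt{-3}$ shows $(\O_{F'}/\mathfrak{p}_3^4)^\times \cong \Z/2 \oplus (\Z/3)^3$ (using that $(1+\pi)^3 \equiv (1+3)^3 \equiv (1+3\pi)^3 \equiv 1 \pmod 9$), and the $\sigma$-action on the $3$-part (with $\sigma(\pi)=-\pi$) sends $1+\pi \mapsto (1+\pi)^{-1}(1+3)$, fixes $1+3$, and sends $1+3\pi\mapsto(1+3\pi)^{-1}$, yielding a $2$-dimensional anti-invariant subspace. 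At each inert $(p_i)$, $\F_{p_i^2}^\times[3]\cong\Z/3$ is fully anti-invariant since Frobenius $x\mapsto x^{p_i}$ acts as inversion on $3$-torsion. Hence the anti-invariant $3$-part of $(\O_{F'}/\mathfrak{f})^\times$ has $\F_3$-rank $2+t$.

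Scholz reflection now gives $r_3(\Cl(F')) \leq r_3(\Cl(F)) = 0$, so $\Cl(F')[3] = 0$, and the conductor exact sequence identifies $\Cl_{\mathfrak{f}}(F')[3]^-$ with the quotient of $(\O_{F'}/\mathfrak{f})^\times[3]^-$ by the image of $\O_{F'}^\times$. The fundamental unit $\epsilon$ of $F'$ contributes a rank-$1$ anti-invariant constraint (as $\sigma(\epsilon) = \pm\epsilon^{-1}$); assuming this constraint is non-degenerate at every non-trivial sub-conductor dividing $\mathfrak{f}$ (a point that follows from $\Cl(F)[3] = 0$ via a refined Leopoldt-type reflection), a M\"obius inclusion-exclusion over divisors of $\mathfrak{f}$ then yields
\[ \#\{\text{exact-conductor anti-invariant cubic characters}\} \;=\; 2\sum_{S\subseteq\{1,\dots,t\}}(-1)^{t-|S|} 3^{|S|} \;=\; 2\cdot(-1)^t(-2)^t \;=\; 2^{t+1}. \]
Dividing by $2$ for the identification $\chi\leftrightarrow \chi^{-1}$ yields exactly $2^t$ cubic fields of discriminant $D$, as claimed.

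The hardest step is verifying that the unit $\epsilon$ really does impose one (and not zero) independent constraint in the anti-invariant $3$-part at every non-trivial sub-conductor of $\mathfrak{f}$; this is where the hypothesis $\Cl(F)[3]=0$ enters most forcefully, since by the refined Scholz--Leopoldt reflection it guarantees the ``unit-coherence'' across sub-conductor quotients which makes the M\"obius sum collapse cleanly to $2^{t+1}$ rather than to a number that would depend on the specific $\epsilon$.
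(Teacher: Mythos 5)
Your approach is genuinely different from the paper's, and you have hit exactly the difficulty the paper flags in its remark after Proposition~\ref{prop:cft}: ``one can also prove Proposition~\ref{prop:compositionlaws} using class field theory, but again the proof is more involved since one must now deal with units in $\Q(\sqrt D)$.'' The paper sidesteps the unit problem entirely by working with Bhargava's parametrization of cubic rings by binary cubic forms over the \emph{imaginary} quadratic field $\Q(\sqrt d)$: via the bijection of \cite[Thm.~13]{hcl1}, a cubic field of discriminant $D=\minus27dn^2$ corresponds to a triple $(S,I,\delta)$ with $S=\O_{dn^2}$, and after unwinding the constraints this reduces to counting pairs $(J,[I])$ with $J$ an $\O_d$-ideal of norm $n$ and $[I]^3=[J]$, modulo conjugation. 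Since $\O_d^\times=\{\pm1\}$ there is no unit obstruction at all: the ideal $\delta$ is pinned down up to sign, $\Cl(F)[3]=0$ makes the cube root $[I]$ unique, and the answer $\tfrac12\cdot 2^{\omega(n)}=2^t$ falls out of counting split ideals of norm $n$ up to conjugation. Your route instead works over the \emph{real} resolvent $F'=\Q(\sqrt{\minus3d})$ via ray class characters, transferring the hypothesis through Scholz reflection; everything you compute (the conductor $\f=\p_3^4\prod(p_i)$, the anti-invariant rank $2+t$ of $(\O_{F'}/\f)^\times[3]^-$, the M\"obius arithmetic giving $2\cdot2^t$) checks out.

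However, there is a genuine gap that you yourself flag. Your M\"obius inclusion--exclusion returns $2^{t+1}$ only under the hypothesis that the fundamental unit $\epsilon$ of $F'$ imposes a nondegenerate rank-one constraint on $(\O_{F'}/\mathfrak g)^\times[3]^-$ for every relevant subconductor $\mathfrak g\mid\f$ --- concretely, that $\epsilon$ is not a cube modulo $\p_3^3$, so that the unit rank-drop is uniformly $1$ at both $\p_3^3\cdot\prod_{i\in S}(p_i)$ and $\p_3^4\cdot\prod_{i\in S}(p_i)$. You attribute this to a ``refined Scholz--Leopoldt reflection'' but do not state or prove it. This is not a standard named theorem, and the hypothesis $\Cl(F)[3]=0$ does not obviously control the image of $\epsilon$ in the local unit groups: Scholz reflection kills $\Cl(F')[3]$, but the local nonvanishing of the fundamental unit modulo $\p_3^3$ is additional arithmetic information. (The two \emph{can} be tied together --- via the reflection between the $3$-rank of the narrow ray class groups of $F'$ and the relevant Selmer group of $F$ that also feeds into the governing-field machinery of \cite{CT} --- but the statement you need must actually be formulated and proved, and the proof is not short.) Until that lemma is supplied, your argument shows only that the count is $2^t$ \emph{if} the unit is locally a non-cube at $3$, and something else (either $3\cdot2^t$ or $0$, depending on which direction the degeneracy runs through the M\"obius sum) otherwise. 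This is exactly why the paper preferred the composition-law route on the imaginary side.
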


\begin{proof}
Let $K$ be a cubic field of discriminant $D = \minus27dn^2 \in \Sigma_n^+$. 
Since $3^5 \mid \Disc(f_K)$, $f_K$ has a triple root over $\F_3$, and hence its central coefficients are divisible by $3$.  Let $S$ be the quadratic ring $\O_{dn^2}$ of discriminant $dn^2$, and let $(S,I,\delta)$ be the triple corresponding to $f_K$ under the bijection of \cite[Thm.~13]{hcl1}.  Thus $I$ is an $S$-ideal such that $I^3 \subset \delta S$ and $\Nm_S(I)^3 = \Nm(\delta)$. 

The condition that $f_K$ corresponds to a maximal cubic order is equivalent to the condition that $I$ is also an ideal for the maximal order $\O_d = \O_{F}$.  To see this, note that the latter is equivalent to the condition that the primitive part of the Hessian $h$ of $f_K$ has discriminant~$d$; see \cite[Eq.~(24)]{hcl1}.  Then note that $\Disc(h) = dn^2$ and $h$ is a scalar multiple of $n$ if and only if $f_K$ has a triple root modulo each $p \mid n$.  

The conditions $I^3 \subset \delta S$ and $\Nm_S(I)^3 = \Nm(\delta)$ are equivalent to the single condition  $I^3= nJ(\delta)$, for some $\O_{d}$-ideal $J$ of norm $n$. 
Indeed, $\Nm_S(I)^3 = \Nm(I)^3/n^3$, so $\a := I^3\delta^{\minus1}$ is an $\O_{d}$-ideal of norm $n^3$ contained in $S$.  By the defining conditions of $\Sigma_n^+$, all primes $p \mid n$ split as $\p\bar\p$ in $\O_{d}$.  Since $\p \otimes_\Z \Z_p \neq S \otimes_\Z \Z_\p \neq \bar \p \otimes_\Z \Z_p$, the ideal $\a$ must be divisible by $n$. Thus $I^3(n\delta)^{\minus1}$ is an $\O_{d}$-ideal of norm $n$.  
Thus each cubic field $K$ of discriminant $D$ gives rise to an $\O_{d}$-ideal $J$ of norm $n$, together with a class $[I] \in \Cl(\O_d)$ which cubes to $[J]$.

Conversely, given a pair $(J, [I])$, where $J$ is an $\O_d$-ideal of norm $n$ and $[I]$ is an ideal class such that $[I]^3 = [J]$, we may write $I^3 = nJ(\delta)$, for some $\delta$. Since $S^\times = \{\pm 1\}$, this $\delta$ is uniquely determined up to sign. Then the equivalence class of $(S,I,\delta)$ corresponds to a $\GL_2(\Z)$-orbit of cubic forms, which itself corresponds to a maximal cubic order of discriminant $D = \minus27dn^2$.  Thus, the bijection of \cite[Thm.~13]{hcl1} induces a bijection:
\begin{align*}
    \{\mbox{cubic fields }K \colon &\Disc(K) = \minus27dn^2\}\\
    &\textstyle\left\updownarrow\vphantom{\int_a^c}\right.\\
\{(J,[I]) \colon J \mbox{ is an $\O_d$-ideal}
 &\mbox{ of norm } n \mbox{ and }[I]^3 = [J]\}/\sim,
\end{align*}
where $\sim$ denotes $\Gal(\O_d/\Z)$-conjugation.   When $\Cl(F)[3] = 0$, every ideal class has a unique cube root, hence the data of $[I]$ is unnecessary.  Since each $p \mid n$ splits in $\O_d$, there are $\frac12\cdot 2^{\omega(n)} = 2^{t}$ equivalence classes of ideals $J$ of norm $n$, and hence $2^t$ cubic fields of discriminant $D = \minus27dn^2$.
\end{proof}

\begin{proposition}\label{prop:cft}
Let $D \in \Sigma_n^-$ and write $F = \Q(\sqrt{D})$. If $\Cl(F)[3] = 0$, then there are exactly $3 \cdot 2^{t}$ cubic fields of discriminant $D$.
\end{proposition}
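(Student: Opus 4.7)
The plan is to use class field theory. Cubic fields $K$ with $\Disc(K) = D$ correspond to $S_3$-Galois extensions $\tilde K/\Q$ with quadratic resolvent $F = \Q(\sqrt{D})$, equivalently to cyclic cubic extensions $\tilde K/F$ on which the nontrivial $\sigma \in \Gal(F/\Q)$ acts by inversion (this being what distinguishes $S_3$ from $C_6$ as the Galois group over $\Q$). By class field theory, these extensions are in bijection with characters $\chi \colon \Cl(F,\mathfrak f) \to \F_3$ of exact conductor $\mathfrak f$ lying in the minus eigenspace $\Cl(F,\mathfrak f)[3]^-$, modulo the involution $\chi \sim -\chi$.

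First I would identify $\mathfrak f$. By the conductor-discriminant formula applied to the $S_3$-representation theory of $\Gal(\tilde K/\Q)$, $\Disc(\tilde K) = \Disc(F)\cdot\Disc(K)^2$; combined with $\mathfrak D_{\tilde K/F} = \mathfrak f^2$ (cyclic of prime degree), this gives $N(\mathfrak f) = |\Disc(K)/\Disc(F)| = 9n^2$. The Kronecker conditions defining $\Sigma_n^-$ force $d \equiv 1 \pmod{8}$ (so $\Disc(F) = -3d$ and $2$ is inert in $F$) and $p_i \equiv 2 \pmod{3}$ with $\left(\tfrac{d}{p_i}\right) = 1$ (so each $p_i$ is inert in $F$), whence $\mathfrak f = \mathfrak p_3^4 \cdot (2) \cdot \prod_{i=2}^t (p_i)$, where $\mathfrak p_3$ is the unique prime above $3$. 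From the exact sequence $\O_F^\times \to (\O_F/\mathfrak f)^\times \to \Cl(F,\mathfrak f) \to \Cl(F) \to 0$, the hypothesis $\Cl(F)[3] = 0$, and the fact that $\O_F^\times = \{\pm 1\}$ is a $2$-group, we obtain $\Cl(F,\mathfrak f)[3]^- \cong (\O_F/\mathfrak f)^\times[3]^-$ as $\F_3[\sigma]$-modules, which decomposes by the Chinese remainder theorem into local contributions.

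The local eigenspace analysis is the heart of the argument. At each inert prime $(2)$ or $(p_i)$, the group $\F_{N(\mathfrak q)}^\times$ is cyclic with $\sigma$ acting as Frobenius $x \mapsto x^{p_i}$; since $p_i \equiv -1 \pmod 3$, this is inversion on the $3$-torsion, so the entire rank-$1$ piece lies in the minus eigenspace and contributes $2$ nontrivial characters of exact conductor. At $\mathfrak p_3^4$, the completion is $\Q_3(\zeta_3)$; I take the uniformizer $\omega = \sqrt{-3d}$, for which $\sigma(\omega) = -\omega$. The $3$-adic logarithm yields a $\sigma$-equivariant isomorphism $U^{(2)}/U^{(4)} \cong \O_F/\mathfrak p_3^2 = \F_3 \oplus \F_3(-)$, while $\zeta_3 \in U^{(1)}/U^{(2)}$ satisfies $\sigma(\zeta_3) = \zeta_3^{-1}$; combining, the minus eigenspace of $(\O_F/\mathfrak p_3^4)^\times[3]$ has $\F_3$-dimension $2$, and the reduction kernel $U^{(3)}/U^{(4)}$ is a $1$-dimensional subspace lying in the minus eigenspace (since $\sigma(1+\omega^3 x) = 1 - \omega^3\sigma(x)$). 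The count of characters of exact conductor $\mathfrak f$ in the minus eigenspace is therefore $(3^2 - 3) \cdot 2 \cdot 2^{t-1} = 6 \cdot 2^t$, and dividing by the $\chi \sim -\chi$ involution yields $3 \cdot 2^t$ cubic fields of discriminant $D$ (the sign of $\Disc(K) = \pm \Disc(F)\cdot N(\mathfrak f)$ being forced to match $D$ because $F$ imaginary implies $K$ is complex cubic).

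The main obstacle is the wild ramification analysis at $\mathfrak p_3$: decomposing $(\O_F/\mathfrak p_3^4)^\times[3]$ into $\sigma$-eigenspaces, including placing $\zeta_3$ correctly within the filtration, and verifying that $U^{(3)}/U^{(4)}$ also lies in the minus eigenspace. The key simplification is the explicit choice of uniformizer $\omega = \sqrt{-3d}$ (rather than the more customary $\zeta_3 - 1$), which makes the $\sigma$-action transparent on every piece of the unit filtration.
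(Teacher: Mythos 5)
Your proof is correct and arrives at the same count, $3\cdot 2^t$, but it is organized rather differently from the paper's. The paper invokes the Delone--Faddeev/Bhargava--Varma dictionary between cubic fields with discriminant dividing $D$ and index-$3$ subgroups of the ring class group $\Cl(\O_D)$ of discriminant $D$ (the "minus eigenspace" condition being built in automatically, since ring class fields of imaginary quadratic fields are generalized dihedral over $\Q$), computes that $\Cl(\O_D)[3]\cong(\Z/3)^{t+2}$ via $(\O_F/3n)^\times/(\Z/3n)^\times$, and then recovers the count at the exact discriminant $D$ by M\"obius inversion over divisors of $3n$. You instead pin down the exact conductor $\mathfrak f=\mathfrak p_3^4\cdot(2)\prod_{i\geq 2}(p_i)$ directly from the conductor-discriminant formula and count pairs $\{\chi,\chi^{-1}\}$ of characters of \emph{exact} conductor $\mathfrak f$ in the $\sigma$-anti-invariant part of $\Cl(F,\mathfrak f)\otimes\F_3$, doing a bare-hands eigenspace analysis of the unit filtration at $\mathfrak p_3$. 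This buys transparency: it avoids the M\"obius inversion entirely (which requires knowing the analogous counts for all divisors of $3n=9\prod p_i$, a nontrivial bookkeeping step the paper leaves implicit), and it verifies by hand the dimension count at $\mathfrak p_3^4$ that the paper simply asserts. Two small points you should make explicit: (i) you use throughout that $U^{(1)}/U^{(4)}$ is elementary abelian, so that its $3$-torsion is the whole $3$-part; this holds because the log identifies $U^{(2)}/U^{(4)}$ with $\O_F/\mathfrak p_3^2$, which is killed by $3$, and the extension by $U^{(1)}/U^{(2)}$ splits via the genuine $3$-torsion element $\zeta_3$; (ii) the conductor $\mathfrak f$ is uniquely determined by its norm $9n^2$ only because every prime dividing $9n^2$ is nonsplit in $F$, a fact you use but don't flag. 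Neither is a real gap.
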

\begin{proof}
Write $D = \minus27dn^2$.  By class field theory, the number of cubic fields with discriminant of the form $\minus3dm^2$, for some $m \mid 3n$, is the number of index 3 subgroups in the ring class group $\Cl(\O_D)$ of discriminant $D$ \cite[\S5.1]{BV}.  Since $\Cl(F)[3] = \Cl(\O_{\minus3D})[3] = 0$ and $D < 0$, the group $\Cl(\O_D)[3]$ is isomorphic to the 3-torsion subgroup of
\[ \dfrac{(\O_F/3n)^{\times}}{(\Z/3n)^\times} \simeq \dfrac{(\O_F/9)^\times}{(\Z/9)^\times} \times \prod_{i = 1}^t \dfrac{(\O_F/p_i)^\times}{(\Z/p_i)^\times}.\]
Note that each $p_i$ is inert in $F$, by definition of $\Sigma_n$, whereas $3$ ramifies. The $3$-torsion subgroup of the right hand side is therefore isomorphic to $(\Z/3\Z)^2 \times (\Z/3\Z)^t$, since $p_i \equiv 2 \pmod* 3$. Thus, $\Cl(\O_D)$ has $\frac12(3^{t+2} - 1)$ index~3 subgroups, and there are that many cubic fields with discriminant of the form $\minus 3dm^2$ for some~$m \mid 3n$.  By M\"{o}bius inversion, the number of cubic fields of discriminant $D = \minus 27dn^2$ is $3 \cdot 2^t$, as claimed. 
\end{proof}

\begin{remark}{\em
One can also prove Proposition~\ref{prop:cft} using \cite{hcl1}, but the proof is more involved since one must grapple with units in $\Q(\sqrt{\minus3D})$.  Similarly, one can also prove Proposition~\ref{prop:compositionlaws} using class field theory, but again the proof is more involved since one must now deal with units in $\Q(\sqrt{D})$; see \cite[Thm.~2.5]{CT}.  In fact, the method we use to prove Proposition \ref{prop:compositionlaws}  can be used to give an alternative expression for the generating series $\Phi_d$ of \cite{CT}, namely, as a finite sum of what are essentially Epstein zeta functions.
}\end{remark}

\subsection{Proof of Theorem \ref{S3main}}\label{sec:thm1pf}
We prove the stronger statement that a positive proportion of totally real (resp.\ complex) cubic fields are not monogenic, despite having no local obstruction to being so.  
Fix $n$ as in the previous section and note that both $\Sigma_n^+$ and $\Sigma_n^-$ have positive natural density and are large in the terminology of Section~\ref{2sel}. 

If $D \in \Sigma_n^\pm$ and $K$ is a cubic field with $\Disc(K) = D$, then the class $\alpha_K$ defined in Section \ref{subsec:isogeny-selmer} lies in $\Sel_{\phi_D}(E^D)$ by Lemma \ref{lem:unob}.  Since $\minus27D$ is not a perfect square, we have $E^D[\hat\phi_D](\Q) = 0$ and hence an injection $\Sel_{\phi_D}(E^D) \hookrightarrow \Sel_3(E^D)$. We may therefore view $\alpha_K$ as an element of $\Sel_3(E^D) \subset H^1(\Q, E^D[3])$. The image of $\alpha_K$ in $\Sha(E^D)[3]$ vanishes if and only if $C_K(\Q) \neq \emptyset$. In particular, $C_K(\Q)\neq \emptyset$ if $K$ is monogenic, since $f_K(x,y) = 1$ has a solution over $\Z$ and thus over $\Q$. Therefore, when $K$ is a monogenic cubic field of discriminant $D$, the classes $\pm \alpha_K$ lie in the subgroup $E^D(\Q)/3E^D(\Q)$ of $\Sel_3(E^D)$.

Let $U_n^+$ (resp.\ $U_n^-$) be the subset of integers $D \in \Sigma_n^+$ (resp.\ $\Sigma_n^-$) such that the $3$-torsion in the class group of $\Q(\sqrt{\minus3D})$ (resp.\ $\Q(\sqrt{D})$) is trivial.  The lower natural density $\mu_{n,\pm}$ of $U_n^\pm$ within $\Sigma_n^\pm$ is at least $1/2$ by \cite[Thm.~4]{BV}. By Lemma \ref{distinct} and Propositions \ref{prop:compositionlaws} and  \ref{prop:cft}, there are $(2\mp1) 2^{t+1}$ distinct non-zero classes $\pm \alpha_K\in \Sel_3(E^D)$ for each $D \in U_n^\pm$. Write $m_D$ for the number of monogenic cubic fields of discriminant $D$. Thus $0\leq m_D\leq (2 \mp1) 2^{t+1}$ for $D\in U_n^\pm$, and
$1 + 2m_D\leq 3^{\rank{E^D(\Q)}}$.

Switching to $2$-descent, this implies that $$(1 + 2 m_D)^{\log_3{2}}\leq 2^{\rank{E^D(\Q)}}\leq \#\Sel_2(E^D).$$ By the above inequality and  Theorem \ref{alpoge2Sel},
\begin{equation*}
 \mu_{n,\pm}\cdot \avg_{D\in U_n^\pm}(1 + 2 m_D)^{\log_3{2}}
+ (1 - \mu_{n,\pm})\cdot 1 \:\leq\: \avg_{D\in \Sigma_n^\pm} \#\Sel_2(E^D)
 \:=\:3.
\end{equation*}
Therefore, $$\avg_{D\in U_n^\pm} (1 + 2 m_D)^{\log_3{2}}\leq 1 + 2 \mu_{n,\pm}^{\minus1}\leq 5.$$
\noindent

The function $f(x):=x^{\log_23}$ is strictly convex on $\R_{>0}.$ 
Thus, given the two constraints (a) $m_D\in [0,(2\mp 1)2^t]$ for all $D\in U_n^\pm$, and (b) $\avg_{D\in U_n^\pm}(1+2m_D)^{\log_32}\leq 5$, the average $\avg_{D\in U_n^\pm}(1+2m_D)$ is 
maximized when $m_D$
takes all its values at the endpoints of  $[0,(2\mp1)2^t]$. Once $t\geq 3$, this occurs when $m_D$
attains its maximum value $(2\mp1)2^t$ for a density of $4/((1+(2\mp1)2^{t+1})^{\log_32}-1 )=O(2^{\minus t\log_32})$ of $D\in U_n^\pm$,
and $m_D=0$  otherwise.
It follows that a (positive)  lower density of at least $1-O(2^{\minus t\log_32})$ of cubic fields~$K$ with $\Disc(K)$ in $U_n^\pm$ are non-monogenic. 

Since the relative lower density $\mu_{n,\pm}$ of $U_n^\pm$ in $\Sigma_n^\pm$ is at least $1/2$, we have produced  $c\cdot X + o(X)$ (for an explicit constant $c>0$) cubic fields $K$ with $\Disc(K)\in \Sigma_n^\pm$ and $|\Disc(K)|< X$ that are not monogenic and have no local obstruction to being monogenic.
By the asymptotic count of all cubic fields due to Davenport and Heilbronn~\cite{DH}, this proves Theorem \ref{S3main} for both positive and negative discriminants.  
\qed

\begin{remark}\label{explicitconstants}
{\em
For negative discriminants, we can even take $t = 1$ in the argument above.  In that case, we have $\dim_{\F_2} \Sel_2(E^D) \equiv \dim_{\F_3}\Sel_3(E^D) \equiv 1\pmod*2$ for all $D\in \Sigma_n^-$  by the computation of the parity of the 3-Selmer rank in  \cite[Prop.~34, 49(b)]{j=0} and the $p$-parity theorem \cite{DD2}. Thus $\# \Sel_2(E^D)\geq 8$ when $m_D\geq 2$ and $\# \Sel_2(E^D)\geq 2$ always. Reasoning as in the proof of Theorem \ref{S3main}, it then follows that at least $\frac{5}{9}$ of cubic fields with discriminant in $U_n^-$ are non-monogenic. If $n = 2\cdot 3$, we find that there are at least
 \[6 \cdot \frac59 \cdot \mu_{n,-} \cdot \mu(\Sigma_n^-)\cdot X \geq 6\cdot \frac{5}{9}\cdot \frac12\cdot \frac{1}{2^4}\cdot\frac{1}{3^6}\cdot\frac{27}{7^2}\cdot \prod_{p\nmid42}(1 - p^{\minus 2})\cdot X\]
such fields, where $\mu(\Sigma_n^-)$ denotes the natural density of $\Sigma_n^-$.  Dividing by the total number  $\frac{1}{4}\cdot\zeta(3)^{\minus1}\cdot(.7599\ldots)\cdot X$ of unobstructed complex cubic fields (Theorem \ref{localthm}), we conclude that the proportion of unobstructed complex cubic fields that are non-monogenic is at least~$.000463$.

For positive discriminants, we may similarly take $t = 2$ and $n = 2 \cdot 3 \cdot 5$ by parity considerations, and find that a proportion of at least $.0000139$ of unobstructed totally real cubic fields are not monogenic.  Of course, we have not tried to optimize these lower bounds.
}
\end{remark}

\subsection{Proofs of Theorems~\ref{Hasse} and  \ref{shatheorem}}\label{sec:final}

The proof of Theorem~\ref{S3main} immediately implies Theorem~\ref{Hasse}: we have produced a positive proportion of (both totally real and complex) cubic fields $K$ such that the curves $C_K \colon z^3=f_K(x,y)$ all have local solutions but not global solutions.

Choosing $t$ large enough in the proof of Theorem~\ref{S3main} also immediately yields Theorem~\ref{shatheorem}.  More generally, if $\Sigma \subset \Z$ is a set of integers that are cubefree and satisfy a further finite set of congruence conditions, then, for a positive proportion of $k \in \Sigma$, we have $\dim_{\F_3}\Sha(E_k)[3] \geq r$. 
Indeed, for any $m$, the set $T_m \cap \Sigma$ has positive density, and 
$\dim_{\F_3}\Sel_{\phi_k}(E_k)\geq m$ for {\it all} $k \in T_m$, by \cite[Thm.~4(ii)]{j=0}.  On the other hand, by Corollary \ref{cor:averagerankbound}, the average rank of $E_k$ is at most~$3/2$.  Thus, for $m \geq r + 2$, it cannot be the case that, for $100\%$ of $k \in T_m \cap \Sigma$, the image of $\Sel_{\phi_k}(E_k)$ in $\Sha(E_k)[3]$ is less than $r$-dimensional.\qed

\bibliographystyle{abbrv}
\bibliography{references}

\begin{thebibliography}{10}

\bibitem{Akhtari2}
S.~Akhtari.
\newblock Cubic {T}hue equations.
\newblock {\em Publ. Math. Debrecen}, 75(3-4):459--483, 2009.

\bibitem{AB}
S.~Akhtari and M.~Bhargava.
\newblock A positive proportion of {T}hue equations fail the integral {H}asse
  principle.
\newblock {\em Amer. J. Math.}, 141(2):283--307, 2019.

\bibitem{leventcircle}
L.~{Alp\" oge}.
\newblock {Points on Curves}.
\newblock {\em {P}rinceton {U}niversity Ph.D. thesis}, 2020.

\bibitem{Bennett}
M.~A. Bennett.
\newblock On the representation of unity by binary cubic forms.
\newblock {\em Trans. Amer. Math. Soc.}, 353(4):1507--1534, 2001.

\bibitem{Berczes}
A.~B\'{e}rczes.
\newblock On the number of solutions of index form equations.
\newblock {\em Publ. Math. Debrecen}, 56(3-4):251--262, 2000.

\bibitem{hcl1}
M.~Bhargava.
\newblock Higher composition laws. {I}. {A} new view on {G}auss composition,
  and quadratic generalizations.
\newblock {\em Ann. of Math. (2)}, 159(1):217--250, 2004.

\bibitem{j=0}
M.~Bhargava, N.~Elkies, and A.~Shnidman.
\newblock The average size of the $3$-isogeny {S}elmer groups of elliptic
  curves $y^2=x^3+k$.
\newblock {\em Journal of the London Mathematical Society}, 101(1):299--327,
  2020.

\bibitem{BST}
M.~Bhargava, A.~Shankar, and J.~Tsimerman.
\newblock On the {D}avenport-{H}eilbronn theorems and second order terms.
\newblock {\em Invent. Math.}, 193(2):439--499, 2013.

\bibitem{BV}
M.~Bhargava and I.~Varma.
\newblock The mean number of 3-torsion elements in the class groups and ideal
  groups of quadratic orders.
\newblock {\em Proc. Lond. Math. Soc. (3)}, 112(2):235--266, 2016.

\bibitem{CT}
H.~Cohen and F.~Thorne.
\newblock Dirichlet series associated to cubic fields with given quadratic
  resolvent.
\newblock {\em The Michigan Mathematical Journal}, 63(2):253--273, 2014.

\bibitem{DH}
H.~Davenport and H.~Heilbronn.
\newblock On the density of discriminants of cubic fields. {II}.
\newblock {\em Proc. Roy. Soc. London Ser. A}, 322(1551):405--420, 1971.

\bibitem{Dedekind}
R.~Dedekind.
\newblock Uber den zusammenhang zwischen der theorie der ideale und der theorie
  der hoheren kongruenzen.
\newblock {\em Abh.der Konig.Gesell.der Wiss.zu Gottingen}, 23:3--38, 1878.

\bibitem{DF}
B.~N. Delone and D.~K. Faddeev.
\newblock {\em The theory of irrationalities of the third degree}.
\newblock Translations of Mathematical Monographs, Vol. 10. American
  Mathematical Society, Providence, R.I., 1964.

\bibitem{DD2}
T.~Dokchitser and V.~Dokchitser.
\newblock Root numbers and parity of ranks of elliptic curves.
\newblock {\em J. Reine Angew. Math.}, 658:39--64, 2011.

\bibitem{EvertseCubic}
J.-H. Evertse.
\newblock On the representation of integers by binary cubic forms of positive
  discriminant.
\newblock {\em Invent. Math.}, 73(1):117--138, 1983.

\bibitem{EvertseSurvey}
J.-H. Evertse.
\newblock A survey on monogenic orders.
\newblock {\em Publ. Math. Debrecen}, 79(3-4):411--422, 2011.

\bibitem{EG}
J.-H. Evertse and K.~Gy\H{o}ry.
\newblock On unit equations and decomposable form equations.
\newblock {\em J. Reine Angew. Math.}, 358:6--19, 1985.

\bibitem{EGbook}
J.-H. Evertse and K.~Gy\H{o}ry.
\newblock {\em Discriminant equations in {D}iophantine number theory},
  volume~32 of {\em New Mathematical Monographs}.
\newblock Cambridge University Press, Cambridge, 2017.

\bibitem{Gaal}
I.~Ga\'{a}l.
\newblock {\em Diophantine equations and power integral bases}.
\newblock Birkh\"{a}user Boston, Inc., Boston, MA, 2002.

\bibitem{GGS}
W.~T. Gan, B.~Gross, and G.~Savin.
\newblock Fourier coefficients of modular forms on {$G_2$}.
\newblock {\em Duke Math. J.}, 115(1):105--169, 2002.

\bibitem{GL}
B.~H. Gross and M.~W. Lucianovic.
\newblock On cubic rings and quaternion rings.
\newblock {\em J. Number Theory}, 129(6):1468--1478, 2009.

\bibitem{Gyori}
K.~Gy\H{o}ry.
\newblock Sur les polyn\^{o}mes \`a coefficients entiers et de discriminant
  donn\'{e}. {III}.
\newblock {\em Publ. Math. Debrecen}, 23(1-2):141--165, 1976.

\bibitem{Heath-Brown}
D.~R. Heath-Brown.
\newblock A new form of the circle method, and its application to quadratic
  forms.
\newblock {\em J. Reine Angew. Math.}, 481:149--206, 1996.

\bibitem{Levi}
F.~{Levi}.
\newblock {Kubische Zahlk\"orper und bin\"are kubische Formenklassen}.
\newblock {\em Leipz. Ber.}, 66:26--37, 1914.

\bibitem{Okazaki}
R.~Okazaki.
\newblock Geometry of a cubic {T}hue equation.
\newblock {\em Publ. Math. Debrecen}, 61(3-4):267--314, 2002.

\bibitem{Ruth}
S.~Ruth.
\newblock {A} bound on the average rank of $j$-invariant 0 elliptic curves.
\newblock {\em {P}rinceton {U}niversity Ph.D. thesis}, 2013.

\bibitem{satge}
P.~Satg\'e.
\newblock Groupes de {S}elmer et corps cubiques.
\newblock {\em J. Number Theory}, 23(3):294--317, 1986.

\end{thebibliography}

\end{document}